\theoremstyle{plain}
\newtheorem{theorem}{Theorem}[section]
\newtheorem{lemma}[theorem]{Lemma}
 \newtheorem{assumption}{Assumption}
\theoremstyle{definition}
\theoremstyle{remark}
\DeclareMathOperator*{\argmin}{\text{argmin}}
\begin{document}

\title{Reduced-order Deep Learning for Flow Dynamics}
\author{Siu Wun Cheung, Min Wang, Wing Tat Leung , Eric T. Chung, Mary F. Wheeler}

\author{
	Min Wang\thanks{Department of Mathematics, Texas A\&M University, College Station, TX 77843, USA (\texttt{wangmin@math.tamu.edu})}
	\and
	Siu Wun Cheung\thanks{Department of Mathematics, Texas A\&M University, College Station, TX 77843, USA (\texttt{tonycsw2905@math.tamu.edu})}
		\and
	Wing Tat Leung\thanks{Institute for Computational Engineering and Sciences, The University of Texas at Austin, Austin, Texas, USA (\texttt{wleungo@ices.utexas.edu})}
	\and
	Eric T. Chung\thanks{Department of Mathematics, The Chinese University of Hong Kong, Shatin, New Territories, Hong Kong SAR, China (\texttt{tschung@math.cuhk.edu.hk})}
	\and
	Yalchin Efendiev\thanks{Department of Mathematics \& Institute for Scientific Computation (ISC), Texas A\&M University,
		College Station, Texas, USA (\texttt{efendiev@math.tamu.edu})}
	\and
	Mary Wheeler\thanks{Institute for Computational Engineering and Sciences, The University of Texas at Austin, Austin, Texas, USA (\texttt{mfw@ices.utexas.edu})}
}

\maketitle
\abstract{
In this paper, we investigate 
neural networks applied to multiscale simulations and discuss a design of
a novel deep neural network 
model reduction approach for multiscale problems.  
Due to the multiscale nature of the medium, 
the fine-grid resolution gives rise to a huge number of degrees of freedom.
In practice, 
low-order models are derived to reduce the computational cost. 
In our paper, we use 
a non-local multicontinuum (NLMC) approach, which represents the solution
on a coarse grid \cite{NLMC}.
Using multi-layer learning techniques, we formulate and learn
 input-output maps constructed  with NLMC on a coarse grid.
We study the features of the coarse-grid solutions that neural networks
capture via relating the input-output 
optimization to $\ell_1$ minimization of PDE
solutions. 
In proposed multi-layer networks, we can learn the forward operators
in a reduced way without computing them as in POD like approaches.
%In our paper, we use multi-layer network 
%for combined time stepping and reduced-order modeling, 
%where at each time step the appropriate important modes are selected.
We present
 soft thresholding operators as activation function, which
our studies show to have some advantages. With these activation functions,
the neural network identifies and selects important multiscale features 
which are crucial in modeling the underlying flow. 
Using trained neural network approximation of the input-output map,
we construct a reduced-order model for the solution approximation.
We use multi-layer networks  for the time stepping and reduced-order 
modeling, where at each time step the appropriate important modes are selected.
For a class of nonlinear problems, we suggest an efficient strategy.
Numerical examples are presented to examine the performance 
of our method. 
}

\section{Introduction}\label{sec:introduction}

%\subsection{Multiscale problems and model reduction}

Modeling of multiscale process has been of great interest in 
diverse applied fields. These include flow in porous media, mechanics, 
biological applications, and 
so on. However, resolving the fine-scale features of such processes could 
be computationally expensive due to scale disparity.
Researchers have been working on developing model reduction methods 
to bridge the information between the scales, in particular,
bring the fine-grid information to coarse grid.  
This generally reduces to constructing appropriate 
approximation spaces when 
solving a governing PDEs numerically.

Local model reduction is an important tool in designing reduced-order models
for practical applications.  Many approaches have been proposed
to perform model reduction, which include 
 \cite{ab05, egw10,  arbogast02, GMsFEM13, AdaptiveGMsFEM, brown2014multiscale, ElasticGMsFEM, ee03, abdul_yun, ohl12, fish2004space, fish2008mathematical, oz07, matache2002two, henning2009heterogeneous, OnlineStokes, chung2017DGstokes,WaveGMsFEM, MsDG}, 
where the coarse-grid equations are formed and the
parameters are computed or found via inverse problems 
\cite{inverse_Zabaras, made08, IB2013, memd10, deepconv_Zabaras}. 
Adopting  upscaling ideas, methods such as Generalized Multiscale Finite Element Method (GMsFEM) \cite{efendiev2013generalized,chung2016adaptive}, Non-local Multicontinuum Methods (NLMC) \cite{chung2018non} have been developed. They share similar ideas in constructing the approximation space, where local 
problems are solved 
to obtain the basis functions. The numerical solution is then searched within the space that constituted by such basis. With extra local computation, these multiscale spaces can capture the solution using  small degrees of freedom. 
These methods have been proved to be
 successful in a number of multiscale problems. 
Global model reduction
methods seeks a reduced dimensional solution approximation 
using global basis functions. When seeking numerical solutions within 
a high-dimensional approximation space $V_h$, the solutions could 
be sparse.  Proper Orthogonal Decomposition (POD)
\cite{bourlard1988auto} is often used to find a low-dimensional 
approximate representation of the solutions.  
%This method is 
%effective when the solution
%does not span over entire $V_h$ in many cases. 

To exploit the advantages of local and global 
model reduction techniques, 
global-local model reduction approaches have been explored
in the past (see, e.g.,
\cite{nonlinear_AM2015,GhommemJCP2013,EGG_MultiscaleMOR,yang2016fast}).
In global-local model reduction methods, the input-output
map is approximated using POD or other approaches in combination
with multiscale methods. Multiscale methods allow adaptive
computing and re-computing global modes \cite{yang2016fast}.

%Artificial Neural Network was first proposed in the 1940s and, it has recently
%gained popularity. It has shown its supreme ability in reproducing and modeling nonlinear processes in areas such as computer vision, speech recognition, etc. Mathematicians have also attempted to expand its application to problems that requires model reduction. The paper \cite{wang2018prediction} used feed forward neural network to build a local reduced-order multiscale model, while other investigations on neural network have shown that it is strongly related to Principle Component Analysis (PCA/POD) \cite{bourlard1988auto}, which is commonly used for global model reduction.

In this paper, we investigate learning strategies for multiscale methods
by approximating the input-output map using multi-layer neural networks.
Nowadays, neural networks have been commonly used in many applications
\cite{schmidhuber2015deep}. In our paper, we use neural networks in conjunction with
local reduced-order models.
Our studies show that the neural networks provide some type of 
reduced-order global modes when considering linear problems.
%\marginpar{Min: I'm not sure what "spectral modes " refers to here.}
In the paper, we investigate these global modes and their relation
to spectral modes of input-output map. 
With a local-global model reduction technique, we 
limit our search of solution to a smaller zone (see Figure 
\ref{fig:illu_sparse_solution} for an illustration) that fits the 
solution populations better. 
Our studies show that the multi-layer network provides a good
approximation of coarse-grid 
input-output map and can be used in conjunction with
observation data.
Moreover, in multi-layer networks, we can learn the forward operators
in a reduced way without computing them as in POD like approaches.
In our paper, we use multi-layer network 
for combined time stepping and reduced-order modeling, 
where at each time step the appropriate important modes are selected.
%We remark that we can use observed data to learn multiscale model as in \cite{DNN_NLMC}. 
 A soft thresholding neural network 
is suggested based on our analysis.

%\marginpar{do we need this figure?}
\begin{figure}[htbp]
	\centering
	\includegraphics[width=0.4\textwidth]{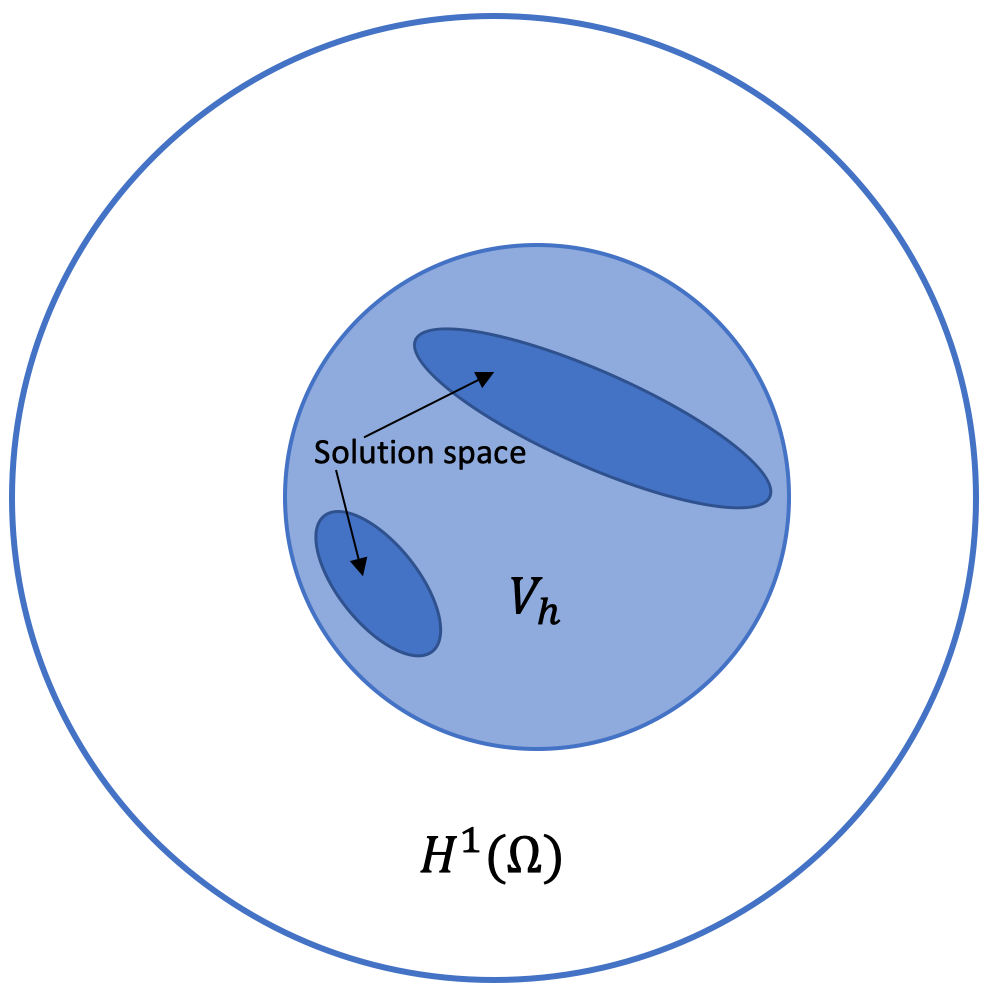}
	\caption{Illustration of sparsity of our solution.}
	\label{fig:illu_sparse_solution}
\end{figure}

Our previous work on neural networks and multiscale methods
\cite{DNN_NLMC} have mostly focused on incorporating the observed
data. 
%With previous studies of neural network in 
%learning multiscale models and conducting model reduction \cite{xxx}, 
%we  propose a reduced-order neural network aiming at 
%modeling a nonlinear flow dynamics with observed coarse-scale data. 
%The network will also work as an model reduction scheme.
In this paper, 
the key tasks
 are: (1) reproducing the coarse-scale multiscale model; 
(2) simultaneously determining the sparse representation 
of solution; and, 
(3) identifying the dominant multiscale modes and provide 
a new basis system which can be used to reduce the multiscale model dimension.
There are a few advantages of using neural networks to accomplish 
these tasks. For (1) \& (2), the computation is inexpensive and robust. 
For (3), the dominant multiscale modes found by the neural network  can be 
generalized to conduct model reduction solutions.
Our studies suggest the use of soft
thresholding and we observe a relation between the soft thresholding and 
$\ell_1$ minimization.

To handle the nonlinearities in PDEs, we propose the use of clustering.
Clustering is an effective pretreatment for learning and prediction. 
Specifically, the solution of nonlinear PDEs clusters around
some nonlinear values of the solution
(see Figure \ref{fig:illu_sparse_solution} for an illustration).
 Sometimes, solution clusters can be
 predicted without computing solutions. For example, if we are interested 
in the solutions to the flow equation
\begin{equation}\label{eq:flow}
\dfrac{\partial u}{\partial t} -\text{div}(\kappa(t,x,u) \nabla u) = f,
\end{equation}
where $\kappa(t,x,u) =\hat{ \kappa}(x) u^2$, and $\hat{ \kappa}(x)$
has a limited number of configurations as shown in Figure \ref{fig:perm}, 
then we could anticipate the solutions to accumulate for 
each configuration of $\hat{ \kappa}(x)$. This is an important example
of channelized permeability fields that arise in many subsurface 
applications.
Thus, if we can use this prejudged information and divide the training samples 
into clusters, we are likely to get an accurate model with significantly 
less training resources.

\begin{figure}	
	\centering
	\includegraphics[width=0.8\textwidth]{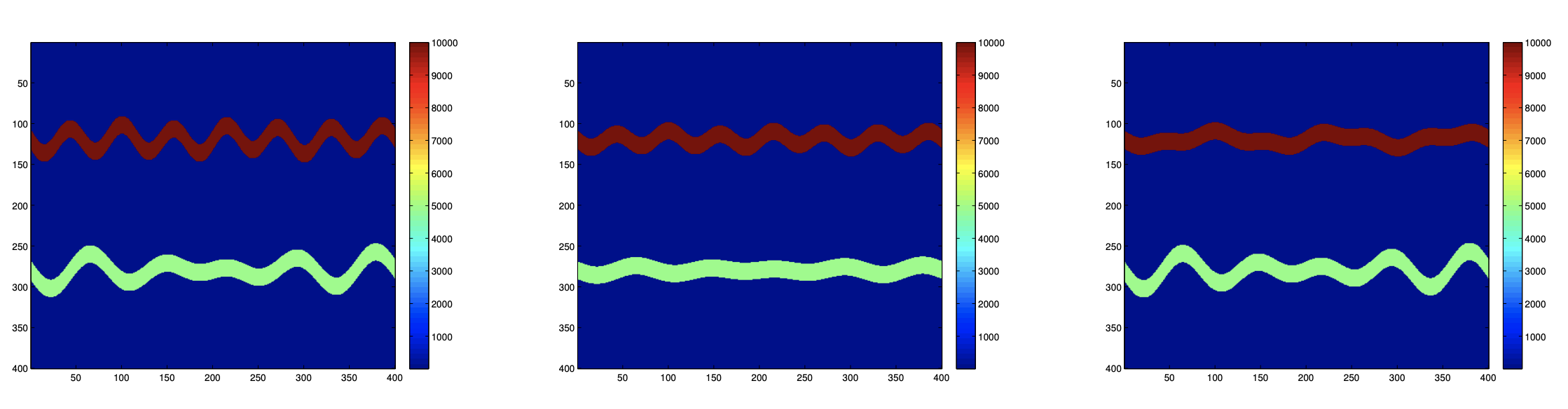}
	\caption{Illustration of different configurations of $\hat{\kappa}(x).$}
	\label{fig:perm}
\end{figure}

%One main objective of this paper is to derive a more robust network
%and provide a better understanding of the application 
%of neural networks to multiscale simulations. 
%In particular, our goal is to 
%investigate how neural network captures the important modes 
%relevant to multiscale features of the solution, 
%by establishing an appropriate relation between $\ell_1$ minimization and model reduction 
%using a soft thresholding nonlinearity. 
%Multi-layer network is used for combined time stepping and reduced-order modeling, 
%where at each time step the appropriate important modes are selected.
%We also suggest an efficient strategy for some class of nonlinear problems 
%that arise in porous media applications.

The main contributions of the paper are the following.
(1) We study how neural network captures the important
 multiscale modes related to the features of the solution.
(2) We relate $\ell_1$ minimization to model reduction and derive a more robust network for our problems using a soft thresholding.
(3) We suggest an efficient strategy for some class of nonlinear problems that arise in porous media applications
(4)  We use multi-layer networks  for combined time stepping and reduced-order modeling, where at each time step the appropriate important modes are selected.
We remark that we can use observed data to learn multiscale model as in \cite{DNN_NLMC}.

The paper is organized as follows. Section \ref{section_pre} will be a preliminary introduction of the multiscale problem with a short review on multiscale methods. A description for general neural network is also provided.  Section \ref{section_nn} mainly focuses on discussions on the reduced-order neural network. The structure of the neural network is presented. Section\ref{sec:discuss} later discuss the proposed neural network from different aspects and propose a way to conduct model reduction with the neural network coefficient. We also present the relation between the soft thresholding neural network and a $\ell_1$ minimization problem in this section. Section \ref{section_example} provides various numerical examples to verify the predictive power of our proposed neural network and provide support to the claims in Section \ref{sec:discuss}. Lastly, the paper is concluded with Section \ref{section_conclude}.

\section{Preliminaries}\label{section_pre}
\subsection{Governing equations and local model reduction }\label{sec:NLMC}

We consider a nonlinear flow equation
\begin{equation}
\dfrac{\partial u}{\partial t} -\text{div}(\kappa(t,x,u) \nabla u) = f,\tag{ \ref{eq:flow}}
\end{equation}
in the domain $(0,T) \times \Omega$. Here, $\Omega$ is the spatial domain, 
$\kappa$ is the permeability coefficient which can have a multiscale 
dependence with respect to space and time, and $f$ is a source function. 
The weak formulation of \eqref{eq:flow} involves finding $u\in H^1(\Omega)$ such that
\begin{equation}\label{weak}
\left(\frac{\partial u}{\partial t}, v\right) + (\kappa(t,x,u) \nabla u, \nabla v ) = (f, v), \quad \forall v \in H^1(\Omega).
\end{equation}
If we  numerically  solve this problem in a $m$-dimensional approximation space $V_h \subset H^1(\Omega)$, and use an Euler temporal discretization, the numerical solution can be written  as
\begin{equation}\label{eq:sum_solution}
u_h(t_n, x)= u_h^n = \sum_{j=1}^m \alpha_j^n \phi_j(x),
\end{equation}
where $\{\phi_j\}_{j=1}^m$ is a set of basis for $V_h$. Moreover, the problem \eqref{weak} can then be reformulated as: find $u_h \in V_h$ such that
\begin{equation}\label{eq: numerical_PDE}
\left(\frac{ u_h^{n+1} - u_h^{n} }{\Delta t}, v_h\right) + (\kappa(t_n, x, u_h^{\nu})\nabla u_h, \nabla v_h ) = (f, v_h), \quad \forall v_h \in V_h,
\end{equation}
where $\nu = n$ or $n+1$ corresponds to explicit and implicit Euler discretization respectively. Here, $(\cdot, \cdot)$ denotes the $ L_2$ inner product.\\

For problems with multiple scales, a multiscale basis function for each coarse node is computed following the idea of upscaling, i.e., the problem can be solved with a local model reduction. Instead of using the classic piece-wise polynomials as the basis functions, we construct the local multiscale basis following NLMC \cite{DNN_NLMC} and use the span of all such basis function as the approximation space $V_h$. More specifically, for a fractured media(Figure \ref{fig:fine_coarse_mesh}), the basis functions of \eqref{eq:flow} can be constructed by the following steps:
\begin{itemize}
	\item Step 1: Partition of domain
	\begin{equation}
	\Omega = \Omega_m \bigoplus_n d_n \Omega_{f,n}.
	\end{equation}
Assume $\mathcal{T}^H$
	is a coarse-grid partition of the domain $\Omega$ with a mesh size $H$ which is further refined into a fine mesh $\mathcal{T}^h$. Denote by $\{K_i | \quad i = 1, \cdots, N\}$ the set of coarse elements in $\mathcal{T}^H$, where $N$ is the number of coarse blocks. We also define the oversampled region $K^+_i$ for each $K_i$, with a few layers of neighboring coarse blocks, see Figure \ref{fig:fine_coarse_mesh} for the illustration of $K_i$ and $K_i^+$.
	We further define the set of all fracture segments within a coarse block $K_j$ as $F^{j} = \{f_n^{(j)}| 1\leq n\leq L_j \} =\{\cup_{n} \Omega_{f,n}\}\cap K_j$ and $L_j = dim\{F^{j}\}$.
	\begin{figure}[!htbp]
		\centering
		\includegraphics[width=0.5\textwidth]{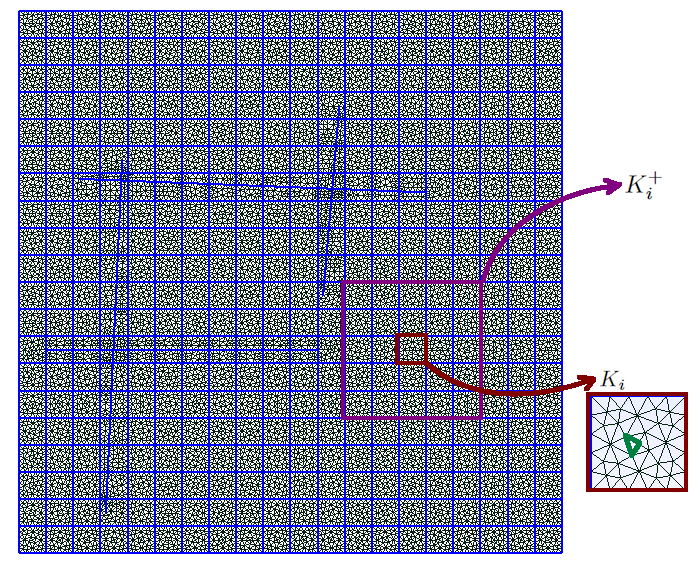}
		\caption{Illustration of coarse and fine meshes. }
		\label{fig:fine_coarse_mesh}
	\end{figure}

	\item Step 2: Computation of local basis function in $K_i^+$.

	The basis for each over-sampled region $\psi_m^{(i)}$ solves the following local
	constraint minimizing problem on the fine grid
	\begin{equation}\label{eq:basis}
	\begin{aligned}
	& a(\psi_m^{(i)}, v) + \sum_{K_j \subset K_i^+} \left(\mu_0^{(j)} \int_{K_j}v + \sum_{1 \leq m \leq L_j} \mu_m^{(j)} \int_{f_m^{(j)}} v \right) = 0, \quad \forall v\in V_0(K_i^+), \\
	& \int_{K_j}\psi_m^{(i)} = \delta_{ij} \delta_{0m}, \quad \forall K_j \subset K_i^+, \\
	& \int_{f_n^{(j)}} \psi_m^{(i)} = \delta_{ij} \delta_{nm}, \quad \forall f_n^{(j)} \in F^{(j)}, \; \forall K_j \subset K_i^+,
	\end{aligned}
	\end{equation}
where $ a(u, v) = \int_{D_m } \kappa_m  \nabla u \cdot \nabla v + \sum_i \int_{D_{f,i}} \kappa_i  \nabla_f u \cdot \nabla_f v$, $\mu_0^{(j)}, \mu_m^{(j)}$ are Lagrange multipliers while $V_0(K_i^+) = \{v\in V(K^+_i)|v= 0 \text{ on } \partial K_i^{+}\}$ and $V(K_i^+)$ is the fine grid space over an over-sampled region $K^{+}_i$. By this way of construction, the average of the basis $\psi_0^{(i)}$ equals $1$ in the matrix part of coarse element $K_i$, and equals $0$ in other parts of the coarse blocks $K_j \subset K_i^+$ as well as any fracture inside $K_i^+$. As for $ \psi_m^{(i)},m\geq1 $, it has average $1$ on the $m$-th fracture continua inside the coarse element $K_i$, and average $0$ in other fracture continua as well as the matrix continua of any coarse block $K_j \subset K_i^+$. It indicates that the basis functions separate the matrix and fractures, and each basis represents a continuum.
\end{itemize}
The resulting multiscale basis space is finally written as $V_h = span \{\psi_m^{(i)}| 0\leq m\leq L_i, 1\leq i\leq N\}$. By renumbering the basis, we denote all basis function as $\{\phi_j\}_{j=1}^{m}$, where $m = \sum_{i=1}^{N} (L_j+1)$.

Thus, the coefficient $U^n = (\alpha_1^n, \alpha_2^n, \ldots, \alpha_m^n)^T$ satisfies the recurrence relation 
%\marginpar{(8) is not true for explicit scheme}
\begin{equation}\label{eq:recurrent_relation}
U^{n+1} = (M + \Delta t A^{\nu})^{-1} (M U^n + \Delta t F^{n}),
\end{equation}
where $M$ and $A^{\nu}$ are the mass matrix and the stiffness matrix
with respect to the basis $\{\phi_j\}_{j=1}^m$, $\nu$ can be $n$ or $n+1$ depending on the temporal scheme that we use. We have
\[ \begin{split}
[M]_{ij} & = \int_\Omega \phi_i(x) \phi_j(x) \, dx ,\\
[A^{\nu}]_{ij} & = \int_\Omega \kappa(t_n,x,u^\nu_h) \nabla \phi_i(x) \cdot \nabla \phi_j(x) \, dx.
\end{split} \]
%Here $U^n \mapsto (M + \Delta t A^{n})^{-1}$ is a nonlinear map.

We claim that a global model reductions can be conducted to the problem described above, as solution $u_h^n(x)$ in many cases can be sparse in $V_h$ even if $V_h$ is a reduced-order space. For instance, $u_h(t_n,x)$ is strongly bonded to initial condition $u_h(t_0,x)$. It can be foreseen that if initial conditions are chosen from a small subspace of $V_h(\Omega)$, $u_h^n(x)=u_h(t_n,x)$ 
is also likely to accumulate somewhere in $V_h$. In other words, the distribution of coefficients $U^{n}$ can hardly expand over the entire $\mathbb{R}^m$ space but only lies in a far smaller subspace.

Other physical restrictions to the problem could also narrow down the space of solution. This indicates that $u_h(t_n,x)$ can be closely approximated with less degrees of freedom. Section \ref{section_nn} and Section \ref{sec:discuss} will be discussing how to identify dominant modes in the space of $U^n$ using a neural network.

\subsection{Neural Network} \label{neural_net_section}
A neural network is usually used to learn a map from existing data. For example,
 if we are given samples $ \{(x_i, y_i)\}_{i=1}^{L}$ from the map $\mathcal{F}: X \to Y$, i.e., $\mathcal{F}(x_i) = y_i$ for $1\leq i \leq L$, and we would like to predict the value of $\mathcal{F}(x_i)$ for $i = L+1, \cdots, L+M$. With the help of neural network, this problem can be reformulated as the optimization problem. The optimization takes $ \{(x_i, y_i)\}_{i=1}^{L}$ as training samples and produces a proper network coefficient $\theta^*$ starting from some initialization chosen at random. More specifically, if the neural network $\mathcal{NN}(\cdot)$ has a feed-forward fully-connected structure, then
 \begin{equation}
\mathcal{NN}(x; \theta) = W_n\sigma(\cdots\sigma(W_2\sigma(W_1x+ b_1 )+b_2)\cdots) +b_n.
 \end{equation}
 Here, $\theta$ represents all tuneable coefficients in neural network $\mathcal{NN}(\cdot)$ and $\sigma(\cdot)$ is some nonlinear activation function. There are many choices of such nonlinear functions \cite{ramachandran2018searching}, while the most common ones used are ReLU and $\tanh$.

 For the network defined above, $\theta$ is indeed defined as $\theta = \{W_1, b_1,W_2, b_2,\cdots, W_n, b_n\}$. We will use the output $\mathcal{NN}(x_i)$ to approximate the desired output $y_i$. The difference between them will be measured using a cost function $\mathcal{C}(\cdot)$.
For example, we can take the mean square error
and the loss function is defined as
\begin{equation}\label{eq: loss_function}
\mathcal{C}(\theta ) = \frac{1}{N}\sum_{i= 1}^{N} (y_i-\mathcal{NN}(x_i))^2,
\end{equation}
which measures the average squared difference between the estimated values and the observed values. The neural network is then optimized by seeking $\theta^*$ to minimize the loss function.
\begin{equation}
\theta^* = \argmin_\theta \mathcal{C}(\theta).
\end{equation}
Numerically, this optimization problem can be solved with a stochastic gradient descent(SGD) type method \cite{kingma2014adam}. By calculating the gradient of the loss function, the coefficient $\theta$ is iteratively updated in an attempt to minimize the loss function. This process is also referred as ``training''.
Once the loss is minimized to a satisfactory small level, the neural network parameters $\theta^*$ is decided, and further, the overall neural network architecture \eqref{eq:w2_multi_layer} is constructed.  
The predictions can then be given as
$\mathcal{NN}(x_i;\theta^*)$ for $ i = L+1, \cdots, L+M$.

\section{Reduced-order neural network}\label{section_nn}

In this section, we present the construction of reduced-order neural network. 
We propose a reduced-order neural network that can model a time series. 
Moreover, if there exists a basis 
that approximates the solution for each
time step with sparse coefficients, then the proposed 
neural network can identify such basis from the training samples.  
Specifically, Subsection \ref{sec: macro_nn} will discuss the macroscopic 
structure of the proposed neural network while Subsection \ref{sec:subnet} 
will discuss two designs of the sub-network of the network 
with more details. Subsection \ref{sec:full_nn} later assembles 
the full multi-layer neural network. 

\subsection{Reduced-order neural network structure}\label{sec: macro_nn}

We propose a reduced-order neural network as shown in Figure \ref{fig:nn_full}. Here, the full network is constituted by several sub-networks. 
Each sub-network $\mathcal{N}^n(\cdot)$ is expected to model a 
one-step temporal evolution from $\mathbf{x}^{n}$ to $\mathbf{x}^{n+1}$ 
in a time series 
$\vec{\mathbf{x}} = [\mathbf{x}^0,\mathbf{x}^1, \cdots, \mathbf{x}^n]$.

Sub-networks should have a general structure as shown in 
Figure \ref{fig:nn_block}. The specific design will vary 
depending on the problem we are modeling 
(see discussions in Subsection \ref{sec:subnet}). The sub-network is built 
in a way that the input $\mathbf{x}^n$ will be first feed 
into a multi-layer fully-connected network named as ``operation layer''.  
This layer is intended for the neural network to capture the map 
between two consecutive time steps. The output of the operation layer 
is then fed into
a soft thresholding function to impose sparsity to the solution 
coefficient.  Lastly, the data will be processed with a 
``basis transform layer'' in which a new basis set will be learned. 
With the basis set, one can represent the solution with sparse 
coefficients assuming such representation exists.

%We also remark that activation functions used in the operation layer can be any commonly used ones such as ReLU and its variants, but only soft thresholding function should be applied after the data has been output by it, which is indeed for the sake of sparsity of the solution.

\begin{figure}[htbp]
	\centering
	\begin{subfigure}[b]{0.85\textwidth}
		\includegraphics[width=\textwidth]{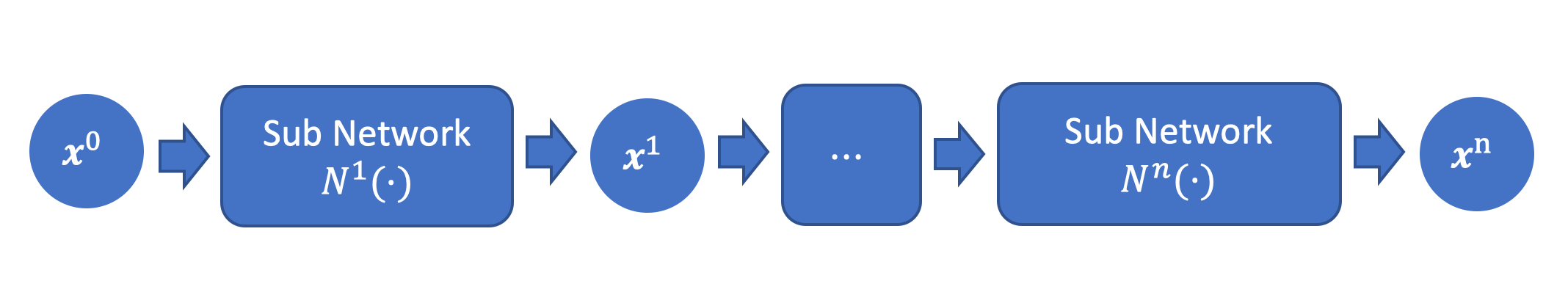}
		\caption{Multi-layer  reduced-order  neural network $\mathcal{NN}(\cdot)$}
		\label{fig:nn_full}
	\end{subfigure}
	~
	\begin{subfigure}[b]{0.85\textwidth}
		\includegraphics[width=\textwidth]{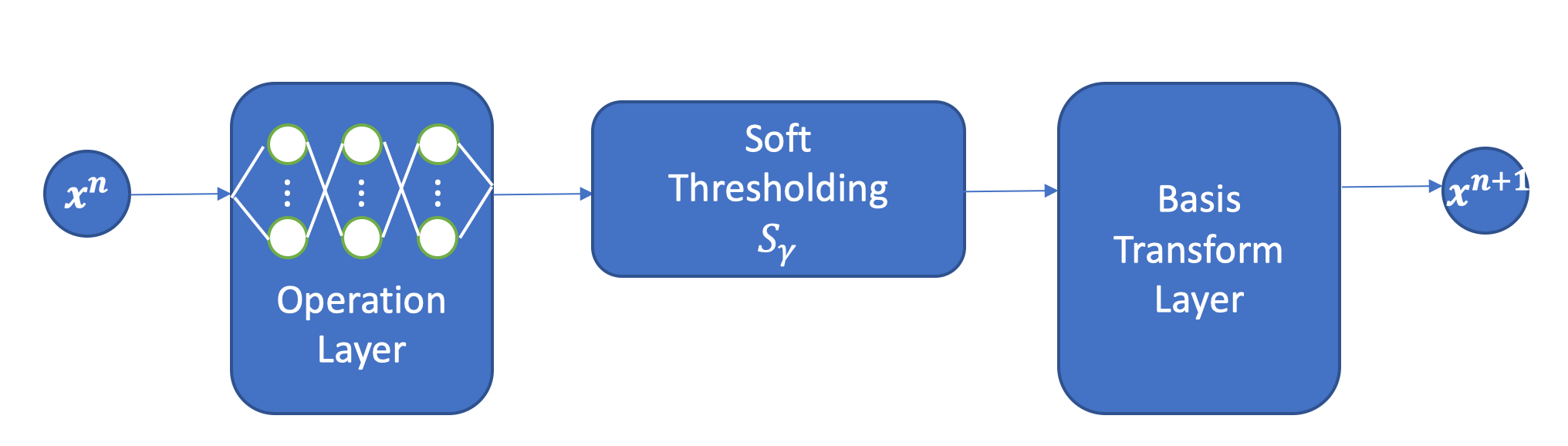}
		\caption{Sub-network $\mathcal{N}^n(\cdot)$}
		\label{fig:nn_block}
	\end{subfigure}
	%	~
	\caption{Reduced-order neural network structure.}
	\label{fig:nn}
\end{figure}

\subsection{Sub-network}\label{sec:subnet}
%\marginpar{Tat: Section 3 add more motivation also cite section 4 for details.}
In this subsection, we present two designs of the sub-network $\mathcal{N}^{n}(\cdot)$. One is for modeling linear dynamics while the other is designed for nonlinear dynamics. One can choose from these two options when assembling the full network depending on the dynamics of interest. Both sub-network designs are intended to learn a new set of basis and then impose the sparsity to the solution coefficient in the new system while learning dynamics.

\subsubsection{Sub-network for linear process}
 
We first present the sub-network for modeling linear dynamics. 
It can be used to model the one-step flow described in \eqref{eq:flow}, 
where
we define the sub-network for the $n$-th time step as  
\begin{equation}
\label{eq:w2_nn}
\mathcal{N}^n(\mathbf{x}^n; \theta_n) := W_n^2S_\gamma(W_n^1\cdot \mathbf{x}^n + b_n).
\end{equation}  
Here, for the sub-network parameter $\theta_n = (W_n^1, W_n^2, b_n)$,
$W_n^1$ and $b_n$ are in the operation layer and 
$W_n^2$ works as the basis transformation layer. 
$S_\gamma$ is the soft thresholding function defined point-wise as 
$S_\gamma: \mathbb{R} \to \mathbb{R}$
\begin{equation}\label{eq:s_gamma}
S_\gamma(x) = \text{sign}(x) (\vert x \vert - \gamma)_+ \begin{cases}
x - \gamma & \text{ if } x \geq \gamma, \\
0 & \text{ if } -\gamma < x < \gamma, \\
x + \gamma & \text{ if } x \leq -\gamma.
\end{cases}
\end{equation}
We further require $ W_n^2$ to be an orthogonal matrix, i.e. $(W_n^2)^T\cdot (W_n^2)  = I $. To this end, 
we train the network \eqref{eq:w2_nn} with respect to the cost function 
\begin{equation}\label{cost}
\mathcal{C}^n(\theta_n) = \|\mathbf{x}^{n+1}-\mathcal{N}^n(\mathbf{x}^n; \theta_n) \|_{2}^2 + \eta_n \|(W_n^2)^T\cdot (W_n^2) - I \|_{1}
\end{equation}
with a penalty on the orthogonality constraint of $W_n^2$. $\eta$ is a hyper coefficient for adjusting the weight of the $\ell_1$ regularization term.  Here, $\mathbf{x}^{n}$ is the input of  $\mathcal{N}^n(\cdot; \theta_n)$, and $\mathcal{C}^n(\cdot)$ measures the mismatch between true solution $\mathbf{x}^{n+1}$ and the prediction $\mathcal{N}^n(\mathbf{x}^n)$ while forcing $W_n^2$ to be orthogonal. We remark that this cost functions is only a part of the full cost function that we will be discussed in Subsection \ref{sec:full_nn}. 

%\subsubsection{Sub-network with basis transformation}

With such a design, the trained neural sub-network $\mathcal{N}^n(\cdot; \theta_n^*)$ will be able to model the input-output map specified by the training data while producing a matrix $ W^2_n$ whose columns forms an orthogonal basis in $\mathbb{R}^m$.

%In other words, as discussed in the introduction, we expect the solution $U^{n+1}$ to accumulate in a subregion of  $\mathbb{R}^m$, and thus should can be approximated with only the orthogonal dominant mode of the $U^{n+1}$ population. We would like to express $U^{n+1}$ with these dominant modes to obtain a sparse representation. The columns of $W_n^2$ are exactly these dominant modes and therefore $ U^{n+1, W^2}$ is sparse.
%
% $W_n^2$ could also be considered as a new coordinate system. This true as $\mathcal{N}^n(\cdot)$ requiring that the columns of $W_n^2$ form a nearly orthogonal basis in $\mathbb{R}^m$ by training in respect to cost function \eqref{cost}. Thus, the output of $\mathcal{N}^n(U^n)$ is sparse with respect to $W^2_n$-system.

\subsubsection{Sub-network for nonlinear process }

Similar to the linear case, a sub-network for nonlinear process can also be designed.  When the output $\mathbf{x}^{n+1}$ is non-linearly dependent on input $\mathbf{x}^n$, we make use of a $d_n$-layer feed-forward neural network $ \tilde{\mathcal{N}}(\cdot) $ to approximate the input-output map. $ \tilde{\mathcal{N}}(\cdot) $ will work as the operation layer of the sub-network. The output of $ \tilde{\mathcal{N}}(\cdot) $ is then processed with soft-thresholding and a basis transformation layer $W_n^{2}$. We define the sub-network to be
\begin{equation}\label{eq:nonlinear_nn_w2}
\begin{split}
\mathcal{N}^n(\mathbf{x}^n; \theta_n) &:= W_n^2 S_\gamma(\tilde{\mathcal{N}} (\mathbf{x}^n; \theta_n)), \\
\tilde{\mathcal{N}}(\mathbf{x}^n; \theta_n) & :=  W_n^{1,d_n}(\cdots(\sigma (W_n^{1,3}\sigma (W_n^{1,2}\sigma(W_n^{1,1} \mathbf{x}^n+b_n^1)+b_n^2)\cdots+b_n^{d_n}),
\end{split}
\end{equation}
where $\theta_n = \left(W_n^{1,1}, W_n^{1,2}, \ldots, W_n^{1,d_n}, W_n^2, b_n^1, b_n^2, \ldots, b_n^{d_n}\right)$ 
is the sub-network parameter, and 
$\sigma(\cdot)$ is a nonlinear activation function.
The cost function for training the sub-network parameter is again defined in \eqref{cost}. 
We also remark that if $d_n=1$ and $\sigma=\mathbf{1}$ is the point-wise identify function, 
then the network structure \eqref{eq:nonlinear_nn_w2} is reduced to \eqref{eq:w2_nn}.

Another approach to reduce the difficulty of reproducing a nonlinear process is clustering. Instead of using a single network to approximate complicated nonlinear relations, we use different networks for different data cluster. As discussed in the Section \ref{sec:introduction}, the clusters of the solutions can be predicted.  Thus, separate the training samples by cluster can be an easy and effective way to accurately recover complicated process.  Discussions on clustering and numerical examples are presented in Section \ref{section_clustering}.

\subsection{Multi-layer reduced order network}\label{sec:full_nn}

Now we construct the full neural network $\mathcal{NN}(\cdot;\theta)$ by stacking up the sub-network $\mathcal{N}^n(\cdot;\theta_n)$. More precisely, $\mathcal{NN}(\cdot; \theta): \mathbb{R}^m \to \mathbb{R}^m$ is defined as:
\begin{equation}
\mathcal{NN}(\mathbf{x}^0; \theta) := \mathcal{N}^n(\cdots \mathcal{N}^1(\mathcal{N}^0(\mathbf{x}^0; \theta_0); \theta_1)\cdots; \theta_{n}),
\label{eq:w2_multi_layer}
\end{equation}
where $\mathcal{N}^n(\cdot;\theta_n)$ is defined as in \eqref{eq:w2_nn} or \eqref{eq:nonlinear_nn_w2} depending the linearity of the process, and $\theta = (\theta_0, \theta_1, \ldots, \theta_{n})$ is the full network parameter. We use such $\mathcal{NN}(\cdot; \theta)$ to approximate time series $\vec{\mathbf{x}} = [\mathbf{x}^0, \mathbf{x}^1, \cdots, \mathbf{x}^{n+1}]$. 
Denote the output of $(t+1)$-fold composition of sub-network $\mathcal{N}^t$ as
\begin{equation}
\mathbf{o}^{t+1} : = \mathcal{N}^t(\cdots \mathcal{N}^1(\mathcal{N}^0(\mathbf{x}^0; \theta_0);\theta_1) \cdots; \theta_{t}). 
\end{equation}
Then $\mathbf{o}^{t+1} $ works as a prediction of the solution at time $t+1$.
The full cost function is then defined as
\begin{equation}\label{eq:cost_multi}
\mathcal{C}(\theta) =\sum_{t= 0}^{n}\|\mathbf{o}^{t+1} -\mathbf{x}^{t+1} \|_{2}^2 + \eta_t \|(W_t^2)^T\cdot (W_t^2) - I \|_{1},
\end{equation}
where $\vec{\mathbf{x}}$ is the true time sequence, and $\eta_t$ is hyper-parameter stands for the weight of the regularizer while $\theta$ represents all tuneable parameters of $\mathcal{NN}(\cdot)$. Each layer (sub-network) corresponds to a one-step time evolution of the dynamics.

Suppose we have $L$ training samples, 
$$\vec{\mathbf{x}_i}= [(\mathbf{x}_i^0),( \mathbf{x}_i^1, \mathbf{x}_i^2,\cdots, \mathbf{x}_i^{n+1})],\quad\quad 1\leq i \leq L.$$
%and compute the corresponding forward dynamics matrix $(M + \Delta t A^{n+1}(U^n))^{-1}$.
%We define the loss function $\mathcal{L}^n$ by
%\begin{equation}
%\label{eq:loss}
%\mathcal{L}^n(\theta^n) = \sum_{i=1}^L \left\|
%(M + \Delta t A^{n+1}(U^n_i))^{-1} - \sum_{k=1}^s \mathcal{C}^n_k(U^n; \theta^n) \Phi_k^n \right\|_F^2,
%\end{equation}
%where $\| \cdot \|_F$ is the Frobenius norm on matrices.
%To train the network $\mathcal{N}^{n}$,
The optimal parameter $\theta^*$ of $\mathcal{NN}(\cdot)$ is then determined by optimizing the cost function $\mathcal{C}(\theta)$
%\begin{equation}
%\theta^* = \argmin_\theta \frac{1}{L} \sum_{i=1}^{L}\mathcal{C}(\vec{\mathbf{x}_i}; \theta), \tag{\ref{eq: loss_function}}
%\end{equation}
subject to this training set $\{\vec{\mathbf{x}_i}\}_{i=1}^{L}$ as discussed Section \ref{neural_net_section}. 
%$\theta $ represents the set $\{W_t^{1},W_t^{2}|\ 0\leq t\leq n\}$ for linear case and $\{W_t^{1,j},W_t^{2}|\ 1\leq j \leq d_t, 0\leq t\leq n\}$ for nonlinear case. 
Once $\theta^{*}$ is decided, predictions can be made for testing samples $[\mathbf{x}_i^1,\cdots, \mathbf{x}_i^{n+1}]$ by $\mathcal{NN}(\mathbf{x}_i^0), i > L.$

\section{Discussions and applications}\label{sec:discuss}
%\marginpar{Professor  Efendiev: Add steps such as 4.2(as sparsity assumptions ) -- 4.4(sparsity is related to $\ell_1$)--4.5(remark, soft thresholding extract modes)-4.1 (neural network can approximated the in-out map )--4.3 (algorithm, extract basis, reduce operator )-- clustering }
In this section, we discuss some theoretical aspects of
proposed neural networks. Specifically, we use the proposed network 
to model fluid dynamics in heterogeneous media, as described 
in \eqref{eq:flow}. 
First, we relate the soft thresholding network with 
$\ell_1$ optimization problem. 
Secondly, we explore how 
learned coefficients of neural network are 
related to the map that is being approximated. 
Thirdly, 
based on the understanding of proposed neural network, 
we present a way to utilize the trained network coefficient 
to construct a reduced-order model.
%\marginpar{Tat: describe the function of each  subsection}

Specifically, we consider a one-layer neural network for single-step linear dynamics
\begin{equation}\label{eq:one_layer}
\mathcal{NN}(\mathbf{x}; \theta) = \mathcal{N}(\mathbf{x}; \theta): = W^2S_\gamma(W^1 \mathbf{x} + b),
\end{equation}
omitting the indices for time step $n$. 
We train the neural network $\mathcal{NN}(\cdot;\theta)$ 
with data pairs $\{(U_i^n, U_i^{n+1})\}_{i=1}^L$, which are NLMC solution 
coefficients (see Section \ref{sec:NLMC} for more details details 
of data generation) to \eqref{eq:flow} at $t^n$ and $t^{n+1}$, respectively. 
More precisely,  we consider the linear case of \eqref{eq:flow}, 
when $\kappa (t,x)$ is independent of $u_h^n$. In this case, the 
one step fluid dynamics \eqref{eq:recurrent_relation} is indeed a
 linear revolution such that $A^n$ is only dependent on time. 
Further, we denote $\hat{W_n}$ and $\hat{b_n}$ by
\begin{equation}
\begin{split}
\hat{W}& = (M + \Delta t A^n)^{-1} M, \\
\hat{b} & = (M + \Delta t A^n)^{-1} \Delta t F^n. \\
\end{split}
\label{eq:param}
\end{equation}
Then 
\begin{equation}
\label{eq:discrete}
U^{n+1}_i = \hat{W} U^n_i + \hat{b},\quad 1\leq i\leq L,
\end{equation}
for all training samples that we use in this section.
We further define the linear map between $U^n_i$ and $U_i^{n+1}$ as  $\mathcal{\hat{L}}(\cdot): \mathbb{R}^m\to \mathbb{R}^m$
\begin{equation}\label{eq:hat_L}
\mathcal{\hat{L}}(\mathbf{x}) := {\hat{W}} \mathbf{x} + {\hat{b}}.
%\qquad \mathcal{L}(\mathbf{x}) := W^2 W^1 \mathbf{x}+ W^2 b.
\end{equation}

From now on, when there is no risk of confusion, we drop the optimal network parameter $\theta^*$
in the trained network $\mathcal{NN}(\cdot)$. fz
We expect $\mathcal{NN}(\cdot)$ to learn the map $\mathcal{\hat{L}}(\cdot)$ from the data while extracting a system $W^2$  
in which the data $U^{n+1}_i$ can be represented sparsely.

\subsection{Sparsity and $\ell_1$ minimization}

To understand the trained neural network $\mathcal{NN}(\cdot)$,
we first assume the following.
\begin{assumption} \label{assum1}
For a subspace $S\subset \mathbb{R}^m$, there exist some orthogonal matrix $\hat{W}^2\in\mathbb{R}^{m\times m}$ such that $(\hat{W}^2)^T\mathcal{\hat{L}}(\mathbf{x})$ can be approximated by a sparse vector in $\mathbb{R}^m$ for any $\mathbf{x}\in S$.  More precisely, there exist an approximation error function $\epsilon(\cdot): \mathbb{N}\to  \mathbb{R}^+$, such that for any $\mathbf{x}\in S$, there exist a corresponding $s$-sparse vector $\mathbf{y}_s\in\mathbb{R}^m$ satisfies
\begin{equation}
\|(\hat{W}^2)^T\mathcal{\hat{L}}(\mathbf{x})-\mathbf{y}_s\|_2\leq \epsilon(s).
\end{equation}
\end{assumption} 
We then take $\{U_{i}^{n}\}_{i=1}^{L}$ from $S$ and define $U^{n+1, \hat{W}^2}_i: = (\hat{W}^2)^T U^{n+1}_i$ for all training pairs$ (U_i^n, U_i^{n+1})$ of $\mathcal{NN}(\cdot)$. By Assumption \ref{assum1}, there exist $s$-sparse vector $U^{s}_{i}\in \mathbb{R}^m$ such that
\begin{equation}\label{eq:w2_coordinates}
||U^{n+1, \hat{W}^2}_i - U^{s}_{i}||_2\leq \epsilon(s)
\end{equation}
for $1\leq i\leq L$. \\

Additionally, for any orthogonal matrix $W^2\in \mathbb{R}^{m\times m}$, we denote define $U_i^{n+1, {W}^2} := (W^2)^T U_i^{n+1}$ and denote it as $[\alpha_{i,1}^{n+1, W^2}, \alpha_{i,2}^{n+1,W^2}, \cdots, \alpha_{i,m}^{n+1,W^2}]^{T}$.
Recall \eqref{eq:sum_solution}, and the corresponding
numerical solution $u_{h,i}^{n+1}$ at time step $n+1$ can be written as
\begin{equation*}
u_{h,i}^{n+1} =\Phi U^{n+1}_i,
\end{equation*}
letting $\Phi =  [\phi_1, \phi_2,\cdots, \phi_m]$ be the multiscale basis functions, and $U^{n+1}_i$ be the corresponding coefficient vector.
By the orthogonality of $W^2$, $u_{h,i}^{n+1}$ can be further written as
\begin{equation}
u_{h,i}^{n+1}  = \Phi (W^2)  U^{n+1, W^2}_i = \Phi^{W^2}  U_i^{n+1, W^2},
\end{equation}
with $\Phi^{W^2}$ defined as
\begin{equation}\label{eq:basis_w2}
\Phi^{W^2}:  =  \Phi W^2 .
\end{equation}

We further denote the columns of $\Phi^{W^2}$ as $\phi^{W^2}_j$'s. Then $\{\phi^{W^2}_j\}_{j=1}^m$ is actually a new set of multiscale basis functions such that $u_{h,i}^{n+1} $ can be written as
\begin{equation}\label{eq:claim3}
u_{h,i}^{n+1} = \sum_{j=1}^{m} \alpha_{i,j}^{n+1,W^2}\phi^{W^2}_j.
\end{equation}
If $W^2$ is taken to be $\hat{W}^2 $ as in Assumption \ref{assum1},  we will obtain 

\begin{equation*}
u_{h,i}^{n+1} = \sum_{j=1}^{m} \alpha_{i,j}^{n+1,\hat{W}^2}\phi^{\hat{W}^2}_j,
\end{equation*}
where the coefficient $U^{n+1,\hat{W}^2}_i =[\alpha_{i,1}^{n+1, \hat{W}^2}, \alpha_{i,2}^{n+1,\hat{W}^2}, \cdots, \alpha_{i,m}^{n+1,\hat{W}^2}]^{T}$ can be closely approximated by a sparse vector $U_i^{s}$.

We then claim that our proposed neural network $\mathcal{NN}(\cdot)$ is able to approximate such $\hat{W}^2$ and a sparse approximation of the output from data.  This is guaranteed by the following lemma from \cite{beck2009fast}:
\begin{lemma}
	\label{lemma:3.1}
	We define $\mathcal{\hat{N}}(\cdot), \mathbb{R}^m \to \mathbb{R}^m$ as
	$$\mathcal{\hat{N}}(\mathbf{x})  :=S_{\gamma} (W\mathbf{x}+b).$$
	The output of $\mathcal{\hat{N}}(\mathbf{x}) $ is the solution of the $\ell_1$-regularized problem
	\begin{equation}
	\label{eq:l1}
	\mathbf{y}^*=\argmin_{\mathbf{y}\in \mathbb{R}^m} \dfrac{1}{2} \| \mathbf{y} - ({W} \mathbf{x} +{b}) \|_2^2 + \gamma \| \mathbf{y} \|_1
	\end{equation}
	by proximal gradient update.
	\begin{proof}
		It is straightforward to see that the directional derivative of the residual with respect to $\mathbf{y}$ is given by $  \mathbf{y}- ({W} \mathbf{x} + {b})$.
		On the other hand, the soft thresholding operator is the proximal operator of the $\ell_1$ norm, i.e.
		\[ S_\gamma(x) = \text{prox}_{\gamma \| \cdot \|_1} (\mathbf{x}) = \argmin_{\mathbf{y}\in\mathbb{R}^m} \left( \dfrac{1}{2} \| \mathbf{y} - \mathbf{x}\|_2^2+ \gamma \| \mathbf{y} \|_1 \right) . \]
		With a zero initial guess, the 1-step proximal gradient update of \eqref{eq:l1} with a step size $\gamma$ is therefore
		\begin{equation*}
		\begin{split}
		\mathbf{y}^* = \mathcal{\hat{N}}(\mathbf{x}) = S_\gamma({W} \mathbf{x} + {b}).
		\end{split}
		\end{equation*}
		%Stacking up the $d$ time steps, we obtain the result.
	\end{proof}
\end{lemma}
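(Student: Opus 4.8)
The plan is to recognize that problem \eqref{eq:l1} is nothing but the evaluation of the proximal operator of the $\ell_1$ norm at the point $\mathbf{z} := W\mathbf{x} + b$, and then to identify that proximal operator explicitly with the soft thresholding map $S_\gamma$. Writing the objective as $F(\mathbf{y}) = g(\mathbf{y}) + h(\mathbf{y})$ with the smooth quadratic $g(\mathbf{y}) = \frac{1}{2}\|\mathbf{y} - \mathbf{z}\|_2^2$ and the nonsmooth term $h(\mathbf{y}) = \gamma\|\mathbf{y}\|_1$, I would first compute $\nabla g(\mathbf{y}) = \mathbf{y} - \mathbf{z}$, noting that $g$ is strongly convex with Hessian equal to the identity, so $F$ admits a unique minimizer.

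The heart of the argument is to show $\text{prox}_{\gamma\|\cdot\|_1}(\mathbf{z}) = S_\gamma(\mathbf{z})$. Since both $g$ and $h$ decouple coordinate-wise, I would reduce the whole problem to the scalar minimization $\min_{y \in \mathbb{R}} \frac12(y - z)^2 + \gamma|y|$ for each component $z$ of $\mathbf{z}$. Here the obstacle is that $|y|$ is not differentiable at the origin, so instead of setting a derivative to zero I would invoke the optimality condition $0 \in (y - z) + \gamma\,\partial|y|$, where $\partial|y| = \{\text{sign}(y)\}$ for $y \neq 0$ and $\partial|0| = [-1,1]$. Checking the three regimes $z > \gamma$, $-\gamma \le z \le \gamma$, and $z < -\gamma$ yields the optimal $y^* = z - \gamma$, $\,0$, and $z + \gamma$ respectively, which is exactly $S_\gamma(z)$ by \eqref{eq:s_gamma}. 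Assembling the coordinates then gives $\text{prox}_{\gamma\|\cdot\|_1}(\mathbf{z}) = S_\gamma(\mathbf{z})$.

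Finally I would cast this in the proximal-gradient language promised by the statement. One proximal-gradient step for $F = g + h$ from an initial guess $\mathbf{y}^0$ with step size $t$ reads $\mathbf{y}^1 = \text{prox}_{t\gamma\|\cdot\|_1}(\mathbf{y}^0 - t\nabla g(\mathbf{y}^0))$. Taking $\mathbf{y}^0 = 0$ and $t = 1$ gives $\mathbf{y}^0 - \nabla g(\mathbf{y}^0) = \mathbf{z} = W\mathbf{x} + b$, hence $\mathbf{y}^1 = \text{prox}_{\gamma\|\cdot\|_1}(W\mathbf{x} + b) = S_\gamma(W\mathbf{x} + b) = \mathcal{\hat{N}}(\mathbf{x})$. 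Because $g$ is quadratic with identity Hessian, this single step already satisfies the optimality condition from the previous paragraph, so $\mathbf{y}^1$ is in fact the exact global minimizer $\mathbf{y}^*$ rather than merely an iterate, and no further iterations are needed. The only points requiring care are the step-size bookkeeping (exact recovery forces $t = 1$, with $\gamma$ entering solely as the threshold in the prox) and the subdifferential case analysis above, which is the sole nontrivial computation.
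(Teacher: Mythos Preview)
Your proposal is correct and follows essentially the same route as the paper: identify the minimization \eqref{eq:l1} as the proximal operator of $\gamma\|\cdot\|_1$ evaluated at $W\mathbf{x}+b$, recognize that this prox is exactly $S_\gamma$, and phrase the result as a single proximal-gradient step from the zero initial guess. Your version is in fact more careful than the paper's --- you supply the coordinate-wise subdifferential argument for $\text{prox}_{\gamma\|\cdot\|_1}=S_\gamma$ and correctly take step size $t=1$, whereas the paper simply cites the prox identity and (somewhat loosely) labels the step size as $\gamma$.
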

Thus, for the one-step neural network $\mathcal{NN}(\cdot)$ defined in \eqref{eq:one_layer}, Lemma \ref{lemma:3.1} implies that
\begin{equation}\label{lemma3.1_specify}
\mathcal{NN}(\mathbf{x}) = \argmin_{\mathbf{y}\in \mathbb{R}^m} \dfrac{1}{2} \| \mathbf{y} - W^2W^1\mathbf{x} + W^2b \|_2^2 + \gamma \| (W^2)^T\mathbf{y} \|_1.
\end{equation}
That is to say, the output of the trained neural network $\mathcal{NN}(\cdot)$ is actually the solution to a $\ell_1$ optimization problem. We further define the linear operator$\mathcal{{L}}(\cdot) : \mathbb{R}^m\to  \mathbb{R}^m$ as 
 \begin{equation}\label{eq:def_L}
\mathcal{{L}}(\mathbf{x}) := W^2W^1\mathbf{x} + W^2b.
\end{equation}
Equation actually \eqref{lemma3.1_specify} implies that 
\begin{equation}\label{eq:part1}
\mathcal{L}(\cdot) \approx \mathcal{NN}(\cdot)
\end{equation}
as $\mathcal{NN}(\mathbf{x})$ minimizes $ \| \mathbf{y} - W^2W^1\mathbf{x} + W^2b \|_2^2$. Moreover, the output of $\mathcal{NN}(\cdot)$ is sparse in the coordinate system $W^2$ as it also minimized the $\| (W^2)^T\mathbf{y} \|_1$ term.  \\

$S_{\gamma}$ is widely used in $\ell_1$-type optimization for promoting sparsity and extracting important features as discussed above. It is therefore also brought into network to extract sparsity from the training data. For other network  defined with activation functions such as ReLU, we also remark there's an correlation between $S_{\gamma}$ and ReLU. Recall its definition in \eqref{eq:s_gamma} :
\[S_\gamma(x) = \text{sign}(x) (\vert x \vert - \gamma)_+ \begin{cases}
x - \gamma & \text{ if } x \geq \gamma, \\
0 & \text{ if } -\gamma < x < \gamma, \\
x + \gamma & \text{ if } x \leq -\gamma.
\end{cases}\]
and that of
$\text{ReLU}: \mathbb{R} \to \mathbb{R}$
\[ \text{ReLU}(x) = \max\{x,0\} = \begin{cases}
x & \text{ if } x \geq 0, \\
0 & \text{ if } x < 0.
\end{cases} \]
We have the following: \\
\noindent One can explicitly represent the soft thresholding operator $S_\gamma$ by the ReLU function as
\begin{equation}
S_\gamma(x) = \text{ReLU}(x-\gamma)-\text{ReLU}(-x-\gamma) \quad  \forall x \in \mathbb{R},
\end{equation}
or in an entry-wise sense, one can write
\begin{equation}
S_\gamma(\mathbf{x}) = J_m \text{ReLU} (J_m^T \mathbf{x} - \gamma\mathbf{1}_{2m})  \quad  \forall \mathbf{x} \in \mathbb{R}^m,
\end{equation}
where $J_m = [I_m, -I_m]$.
Activation functions  $S_{\gamma}(\cdot)$ can thus be easily  implemented with the help of ReLU. Further, it also means that our proposed neural network is only a special class of neural networks that are defined with ReLU.

\subsection{linear operator $\mathcal{\hat{L}}\approx \mathcal{NN}$}

For neural network $\mathcal{NN}(\cdot)$ as defined in \eqref{eq:one_layer}
$$\mathcal{NN}(\mathbf{x}) = W^2S_{\gamma}(W^1 \mathbf{x}+b), $$ 
we claim the following

\begin{lemma}\label{calim1}
We assume Assumption \ref{assum1} holds. There exist a set of parameters $(W^1,W^2,b)\in\mathbb{R}^{m\times m}\times\mathbb{R}^{m\times m}\times\mathbb{R}^m$ such that
\begin{equation}\label{eq: lemma2}
\|\mathcal{NN}(\mathbf{x})-\mathcal{\hat{L}}(\mathbf{x})\|_2 \leq 2 \epsilon(s)+s^{\frac{1}{2}}\gamma, \; \quad\forall x\in S.
\end{equation}
\end{lemma}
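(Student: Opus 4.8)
The plan is to exhibit an explicit choice of parameters $(W^1, W^2, b)$ that aligns the network with the sparsifying coordinate system of Assumption \ref{assum1}, and then control the residual of the soft thresholding operator against an $s$-sparse proxy. Concretely, I would take $W^2 = \hat{W}^2$ (the orthogonal matrix guaranteed by Assumption \ref{assum1}), together with $W^1 = (\hat{W}^2)^T \hat{W}$ and $b = (\hat{W}^2)^T \hat{b}$, where $\hat{W}, \hat{b}$ are as in \eqref{eq:param}. With this choice the argument of the thresholding becomes $W^1 \mathbf{x} + b = (\hat{W}^2)^T(\hat{W}\mathbf{x} + \hat{b}) = (\hat{W}^2)^T \mathcal{\hat{L}}(\mathbf{x})$, so that $\mathcal{NN}(\mathbf{x}) = \hat{W}^2 S_\gamma\bigl((\hat{W}^2)^T \mathcal{\hat{L}}(\mathbf{x})\bigr)$.

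Next I would reduce the claim to a statement about the thresholding residual alone. Writing $\mathbf{z} := (\hat{W}^2)^T \mathcal{\hat{L}}(\mathbf{x})$ and using that $\hat{W}^2$ is orthogonal (hence $\hat{W}^2 (\hat{W}^2)^T = I$ and $\hat{W}^2$ preserves the $\ell_2$ norm), one obtains
\begin{equation*}
\|\mathcal{NN}(\mathbf{x}) - \mathcal{\hat{L}}(\mathbf{x})\|_2 = \bigl\|\hat{W}^2\bigl(S_\gamma(\mathbf{z}) - \mathbf{z}\bigr)\bigr\|_2 = \|S_\gamma(\mathbf{z}) - \mathbf{z}\|_2.
\end{equation*}
Thus it suffices to bound $\|S_\gamma(\mathbf{z}) - \mathbf{z}\|_2$, where by Assumption \ref{assum1} (via \eqref{eq:w2_coordinates}) the vector $\mathbf{z}$ lies within $\epsilon(s)$ of some $s$-sparse $\mathbf{y}_s \in \mathbb{R}^m$.

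The core estimate proceeds by the triangle inequality routed through $\mathbf{y}_s$,
\begin{equation*}
\|S_\gamma(\mathbf{z}) - \mathbf{z}\|_2 \leq \|S_\gamma(\mathbf{z}) - S_\gamma(\mathbf{y}_s)\|_2 + \|S_\gamma(\mathbf{y}_s) - \mathbf{y}_s\|_2 + \|\mathbf{y}_s - \mathbf{z}\|_2,
\end{equation*}
and I would bound the three terms separately. The last term is at most $\epsilon(s)$ directly from \eqref{eq:w2_coordinates}. For the first term I would invoke that $S_\gamma$ is non-expansive in $\ell_2$ — it is the proximal operator of $\gamma\|\cdot\|_1$ by Lemma \ref{lemma:3.1}, equivalently coordinate-wise $1$-Lipschitz — giving $\|S_\gamma(\mathbf{z}) - S_\gamma(\mathbf{y}_s)\|_2 \leq \|\mathbf{z} - \mathbf{y}_s\|_2 \leq \epsilon(s)$. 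The middle term carries the $s^{1/2}\gamma$ contribution: since the pointwise residual satisfies $|S_\gamma(t) - t| = \min(|t|, \gamma) \leq \gamma$, and $\mathbf{y}_s$ is supported on at most $s$ coordinates (with $S_\gamma(0) - 0 = 0$ off the support), we get $\|S_\gamma(\mathbf{y}_s) - \mathbf{y}_s\|_2^2 \leq s\gamma^2$, hence $\|S_\gamma(\mathbf{y}_s) - \mathbf{y}_s\|_2 \leq s^{1/2}\gamma$. Summing the three bounds yields exactly $2\epsilon(s) + s^{1/2}\gamma$.

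I expect the only genuinely delicate point to be the bookkeeping in the middle term: recognizing both that the residual $S_\gamma(t) - t$ is uniformly bounded by $\gamma$ and, crucially, that it vanishes off the support of $\mathbf{y}_s$, so the error accumulates over only $s$ coordinates. This is what upgrades a naive $m^{1/2}\gamma$ estimate to the sparsity-aware $s^{1/2}\gamma$ rate. The parameter selection and the appeal to non-expansiveness of $S_\gamma$ are then routine, and the orthogonality of $\hat{W}^2$ is precisely what makes the outer factor $W^2$ drop out of the norm.
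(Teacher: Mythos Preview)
Your proposal is correct and follows essentially the same route as the paper: the same explicit choice $W^{2}=\hat W^{2}$, $W^{1}=(\hat W^{2})^{T}\hat W$, $b=(\hat W^{2})^{T}\hat b$, the same reduction via orthogonality to $\|S_{\gamma}(\mathbf{z})-\mathbf{z}\|_{2}$, and the same three-term triangle inequality through $\mathbf{y}_{s}$ and $S_{\gamma}(\mathbf{y}_{s})$. The only cosmetic difference is that you justify the first term by citing the proximal-operator non-expansiveness from Lemma~\ref{lemma:3.1}, whereas the paper simply states the coordinate-wise $1$-Lipschitz property $|S_{\gamma}(z_{2})-S_{\gamma}(z_{1})|\leq|z_{2}-z_{1}|$ directly.
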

\begin{proof}
		By Assumption \ref{assum1}, there exist some orthogonal matrix $\hat{W}^2\in\mathbb{R}^{m\times m}$  such that for all $\mathbf{x}\in S$, we have
\begin{equation}
\|(\hat{W}^2)^T\mathcal{\hat{L}}(\mathbf{x})-\mathbf{y}_s\|_2 \leq\epsilon(s),
\end{equation}
where $\mathbf{y}_s$ is an $s$-sparse vector.
Next, we consider $W^1 = (\hat{W}^2)^T \hat{W}$, $W^2 = \hat{W}^2$ and $b=(\hat{W}^2)^T \hat{b}$. We recall the definition of $\mathcal{\hat{L}}(\cdot)$
\begin{equation*}
\mathcal{\hat{L}}(\mathbf{x}) := {\hat{W}} \mathbf{x} + {\hat{b}}.
\end{equation*}
The difference between $\mathcal{NN}(\mathbf{x})$ and $\hat{\mathcal{L}}(\mathbf{x})$ can then be estimated by
\begin{align*}
\|\mathcal{NN}(\mathbf{x})-\hat{\mathcal{L}}(\mathbf{x})\|_{2} & =\|\hat{W}^{2}S_{\gamma}\Big((\hat{W}^{2})^{T}\hat{W}\mathbf{x}+(\hat{W}^2)^T \hat{b}\Big)-\hat{\mathcal{L}}(\mathbf{x})\|_2\\
 & =\|\hat{W}^2\Big(S_{\gamma}\Big((\hat{W}^{2})^{T}\hat{W}\mathbf{x}+(\hat{W}^2)^T\hat{b}\Big)-(\hat{W}^{2})^{T}\mathcal{\hat{L}}(\mathbf{x})\Big)\|_{2}\\
 & =\|\hat{W}^2\Big(S_{\gamma}\Big((\hat{W}^{2})^{T}\mathcal{\hat{L}}(\mathbf{x})\Big)-(\hat{W}^{2})^{T}\mathcal{\hat{L}}(\mathbf{x})\Big)\|_{2}\\
 & =\|S_{\gamma}\Big((\hat{W}^2)^T\mathcal{\hat{L}}(\mathbf{x})\Big)-(\hat{W}^{2})^{T}\hat{\mathcal{L}}(\mathbf{x})\|_{2}.\\
%& = \|S_{\gamma}\Big((\hat{W}^{2})^{T}\hat{W}\mathbf{x} + (\hat{W}^2)^T\hat{b}\Big)-\Big((\hat{W}^{2})^{T}\hat{W}\mathbf{x} +(\hat{W}^{2})^{T}\hat{b}\Big) \|_{2}
\end{align*}

Since $|S_{\gamma}(z_{2})-S_{\gamma}(z_{1})|\leq|z_{2}-z_{1}|,\;\forall z_{1},z_{2}\in\mathbb{R}$,
we have
\[
\|S_{\gamma}\Big((\hat{W}^2)^T\mathcal{\hat{L}}(\mathbf{x})\Big) - S_{\gamma}(\mathbf{y}_s)\|_{2}\leq \|(\hat{W}^2)^T\mathcal{\hat{L}}(\mathbf{x})-\mathbf{y}_s\|_2\leq \epsilon(s).
\]
%\[
%\|S_{\gamma}\Big((W^{2})^{T}\hat{W}\mathbf{x}\Big)-S_{\gamma}(\mathbf{y}_{s})\|_{2}\leq\|(W^{2})^{T}\hat{W}\mathbf{x}-\mathbf{y}_{s}\|_{2}\leq \epsilon(s).
%\]
Thus, we obtain
\begin{align*}
&\|S_{\gamma}\Big((\hat{W}^2)^T\mathcal{\hat{L}}(\mathbf{x})\Big)-(\hat{W}^{2})^{T}\hat{\mathcal{L}}(\mathbf{x})\|_{2} \\
\leq &\|S_{\gamma}\Big((\hat{W}^2)^T\mathcal{\hat{L}}(\mathbf{x})\Big) - S_{\gamma}(\mathbf{y}_s)\|_2+\| S_{\gamma}(\mathbf{y}_s )-\mathbf{y}_s  \|_2 +  \|\mathbf{y}_s-(\hat{W}^{2})^{T}\hat{\mathcal{L}}(\mathbf{x})\|_{2}  \\
\leq &2\epsilon(s) _+ \| \mathbf{y}_s -S_{\gamma}(\mathbf{y}_s ) \|.
\end{align*}
Since $|(S_{\gamma}(\mathbf{y}_{s})-\mathbf{y}_{s})_{i}|\leq\gamma$,
we have
\[
\|S_{\gamma}(\mathbf{y}_{s})-\mathbf{y}_{s}\|_{2}^{2}=\sum_{(\mathbf{y}_{s})_{i}\neq0}|(S_{\gamma}(\mathbf{y}_{s})-\mathbf{y}_{s})_{i}|^{2}\leq s\gamma^{2},
\]
and therefore we have
\[
\|\mathcal{NN}(\mathbf{x})-\hat{\mathcal{L}}(\mathbf{x})\|_{2}\leq 2\sigma(s)+s^{\frac{1}{2}}\gamma,
\]
letting $W^1 = (\hat{W}^2)^T \hat{W}$, $W^2 = \hat{W}^2$ and $b=(\hat{W}^2)^T \hat{b}$.
		%Stacking up the $d$ time steps, we obtain the result.
\end{proof}
%Through training, we actually obtained an operator  $\mathcal{NN}(\cdot) : \mathbb{R}^m\to \mathbb{R}^m$, with $\mathcal{NN} (\mathbf{x}) = W^2 S_\gamma (W^1 \mathbf{x} +b)$. 

Since $\mathcal{NN}(\cdot)$ is trained with $(U^{n}_i, U^{n+1}_i)$, where $U^{n}_i\in S $ and $\mathcal{\hat{L}}(U^{n}_i  ) =U^{n+1}_i $, we have 
\begin{equation}
\mathcal{NN}(\mathbf{x}) \approx \mathcal{\hat{L}}(\mathbf{x}), \quad \mathbf{x}\in S.
\end{equation}
 More specifically, this approximation error $\|\mathcal{NN}(\mathbf{x})-\mathcal{\hat{L}}(\mathbf{x})\|_2 $ is small for all $\mathbf{x}\in S$ providing sufficient training. Therefore, by Lemma \ref{calim1},  we claim the trained parameters closely approximate the optimal choice to guarantee the small error indicated in \eqref{eq: lemma2} , i.e.
\[W^2 \approx\hat{W}^2,\quad W^2W^1 \approx \hat{W}, \quad \text{and} \quad  W^2 b = \hat{b}.\]

%\begin{equation}
%U^{n+1}_i \approx \mathcal{NN}(U^n_i).
%\end{equation}
%Therefore, we say $U^{n+1}_i$ can be approximated with the solution to the optimization problem in \eqref{lemma3.1_specify}. The $\ell_1$ regularizer term indicates the solution is thus sparse in the $W^2$ system. In other words, if $\mathcal{NN}(\cdot)$ is trained with data under Assumption \ref{assum1}, it will be able to learn a matrix $W^2\approx \hat{W}^2$  and provide a sparse approximation of $U^{n+1}_i$ providing sufficient training.

However, due to the high dimension of $\hat{W}$, full recovery of $\hat{W}$ requires enormous number of training and is thus impractical.  However,  by enforcing $\mathcal{NN}(\mathbf{x}) =  \mathcal{\hat{L}}(\mathbf{x})$ for $\mathbf{x} \in S$, the neural network learns a set of parameters $W^2W^1 \neq \hat{W}$, and $W^2b \neq \hat{b} $ such that they functions similarly as $\mathcal{\hat{L}}(\cdot)$ on the subset $S$ in the sense of linear operator. A validation of this is later provided in Subsection \ref{section_same_function}. Recall definition of $\mathcal{L}(\cdot)$ in \eqref{eq:def_L} and the fact that it can approximate $\mathcal{NN}(\cdot)$ as in \eqref{eq:part1}, we claim the linear operator have the following property:
\begin{equation}\label{eq:part2}
\mathcal{{L}}(\mathbf{x}) \approx \mathcal{\hat{L}}(\mathbf{x}) \quad \forall\mathbf{x} \in S.
\end{equation}
In the following subsection, we further construct a reduced-order model with the help of  $\mathcal{L}(\cdot)$.

%\subsubsection{linear 1 }
%We further claim that the trained neural network provide us with a linear operator which can closely approximate the linear operation in \eqref{eq:discrete}. 

%\begin{equation}\label{linear_operator}
%\mathcal{L}(\mathbf{x}) := {W_n} \mathbf{x} + {b_n},\qquad  \mathcal{\hat{L}}(\mathbf{x}) := {\hat{W}_n} \mathbf{x} + {\hat{b}_n},
%\end{equation}
%and claim the following
%\begin{claim}\label{claim1}
%	\begin{equation}\label{eq:apprx}
%	\mathcal{\hat{L}}(\cdot) \approx \mathcal{N}^n(\cdot) \approx \mathcal{L}(\cdot)
%	\end{equation}
%	on the subspace $S \subset \mathbb{R}^m$.
%	%This implies
%	%\begin{equation}\label{eq:}
%	%\mathcal{L} (U^n) \approx \mathcal{\hat{L}} (U^n) = U^{n+1}.
%	%\end{equation}
%\end{claim}

\subsection{Model reduction with $W^{2}$}\label{model_reduction_section}

In this subsection, we further assume the $s$-sparse vector $\mathbf{y}_s$ in Assumption \ref{assum1} 
has non-zero entries only at fixed coordinates for all $ \mathbf{x}$ in $S$. That is to say, we have a fix reordering $\{j_k\}_{k= 1}^s$ for $\{1,2\cdots m\}$, such that $(\mathbf{y}_s)_{j_k}= 0$ for $ s+1 \leq k\leq m$.

Then, we will be able to utilize the coordinate system $W^2\approx \hat{W}^2$ learned through training network to construct a reduced-order operator $ \mathcal{L}_{s}(\cdot)$, such that it can approximate the linear map  $\mathcal{\hat{L}}(\cdot)$ and maps $\mathbf{x}$ in $ S$ to a $s$-sparse vector in $\mathbb{R}^m$. To do so, we first define $\mathcal{L}(\cdot)$ form learned coefficients of $\mathcal{NN}(\cdot)$ as in last subsection, and $\mathcal{L}_{s}(\cdot)$ will be exactly a truncation of it.

Moreover, let the new basis set $\{\phi_{j}^{W^2}\}_{j=1}^m$ be defined with trained coefficient $W^2$ as in \eqref{eq:basis_w2}. When truncating $W^2$ in  $\mathcal{L}(\cdot)$, we also determine the dominant basis among  $\{\phi^{W^2}_j\}_{j=1}^m$ simultaneously. Thus, we can view the model reduction from another aspect that we actually drop the basis with less significance and represent the solution with only the dominant multiscale modes.

To  construct such $\mathcal{L}_{s}(\cdot)$, we follow the steps:
\begin{enumerate}
	\item Find the dominant coordinates of outputs $\mathcal{{NN}}(\cdot )$ in the system $W^2$.
	\begin{enumerate}
		\item Compute $W^2$ system coefficients of $\mathcal{{NN}}(U^{n}_i)$ for all training samples by 
		\begin{equation*}
		O^{n+1, W^2}_i:= (W^2)^T \mathcal{{NN}}(U^{n}_i), \quad 1\leq i\leq L,
		\end{equation*}
		where $i$ refers to the sample index. Notice $(W^2)^T\mathcal{NN}(\mathbf{x})$ is sparse for $\mathbf{x}\in S$ by \eqref{lemma3.1_specify}, therefore $O^{n+1, W^2}_i$ is also sparse. 
		\item Calculate the quadratic mean of $\{O^{n+1, W^2}_i\}_{i=1}^{L}$ over all samples, coordinate by coordinate:
		$$S_j := \frac{1}{L}(\sum_{i= 1}^L |O^{n+1, W^2}_{i,j}|^2)^{\frac{1}{2}},\quad 1\leq j \leq m.$$
		\item Sort the quadratic mean value $S_j$ in descending order and denoted the reordered sequence as $\{S_{j_k}\}_{k=1}^{m}$.
	
	\end{enumerate}
	\item Keep the dominant $j_k$-th columns of $W^2$ for $k= 1, \cdots, s$. Then let the rest columns be zero. Thus, we construct a reduced-order coordinate system $W^{2,s}\in \mathbb{R}^{m\times m}$.
	Consequently,  $\mathbf{y} = \mathcal{\hat{L}}(\mathbf{x})$ for any $\mathbf{x}\in S$ can be approximated with the reduced-order system $W^2$ as an $s$-sparse vector $\mathbf{y}^{ W^{2,s}} : = (W^{2,s})^T\mathbf{y}$, and
	\begin{equation}
	\mathbf{y}^{ W^{2,s}} \approx  (W^{2})^T \mathbf{y}.
	\end{equation}
	 Consequently,  for training/testing samples, we have ${U}^{n+1, W^{2,s}}_i \approx 	{U}^{n+1, W^{2}}_i$.  Thus,  corresponding function  $u_{h,i}^{n+1} $ can be approximated with only basis $\{\phi_{j_k}^{W_2}| 1\leq k\leq s\}$ correspond to dominant multiscale modes.
	\begin{equation}
	u_{h,i}^{n+1}  \approx \Phi^{W^2}  U_i^{n+1, W^{2,s}} = \sum_{k=1}^{s} \alpha_{i,j_k}^{n+1,W^2}\phi^{W^{2}}_{j_k}.
	\end{equation}
	
	\item We finally define the reduced linear operator $\mathcal{L}_s(\cdot): \mathbb{R}^m\to \mathbb{R}^m$ as
	\begin{equation}\label{reduce_claim}
	\mathcal{L}_s(\mathbf{x}) = W^{2,s}W^1 \cdot \mathbf{x} + W^{2,s}b, \quad \mathbf{x}\in \mathbb{R}^m.
	\end{equation}
	Here, the output of $\mathcal{L}_s(\cdot)$ is an $s$-sparse vector in $\mathbb{R}^m$.
\end{enumerate}
		
This algorithm is designed based on the fact that  $O_i^{n+1,W^2} \approx (W^{2})^TU^{n+1}_i = U^{n+1,W^2}_i $ as $\mathcal{NN}(\cdot )$ is fully trained with $(U^{n}_i, U^{n+1}_i)$.  Thus, the order of $S_j$ can reflect not only the significance of the coordinates of the output of $\mathcal{NN}(\cdot)$  but also that of $(W^2)^T \mathcal{L}(\mathbf{x})$ for all $\mathbf{x}$ in $S$.  Moreover, the existence of the sparse approximation $\mathbf{y}_s$ to $(\hat{W}^2)^T\mathcal{\hat{L}}(\mathbf{x})$ as described in Assumption \ref{assum1} guarantees the effectiveness of the ordering.

We then claim that this reduced-order linear operator $\mathcal{L}_s(\cdot)$ can approximate the true input-output map $\mathcal{\hat{L}}(\cdot)$ on $S$:
%\begin{claim}\label{claim4}
	Since 	$\mathcal{L}_s(\cdot)$ is simply a truncation of $\mathcal{L}(\cdot)$,  we have:
	\begin{equation}
	\mathcal{L}_s \longrightarrow\mathcal{L},\quad \text{as}\quad s \rightarrow m.
	\end{equation}
	Moreover, recall \eqref{eq:part2}
	\begin{equation*}
\mathcal{\hat{L}}(\mathbf{x}) \approx \mathcal{{L}}(\mathbf{x}),\quad \forall \mathbf{x} \in S,
	\end{equation*}
	it implies the following
	\begin{equation}\label{reduce_verify}
	\mathcal{\hat{L}}(\mathbf{x}) \approx\mathcal{L}_s(\mathbf{x}), \quad \forall \mathbf{x} \in S.
	\end{equation}
%\end{claim}

This property of $\mathcal{L}_s(\cdot)$ provides us a way to represent the projected vectors $\mathcal{\hat{L}}(\mathbf{x})$ for $\mathbf{x} \in S$ using a vector with only $s$ nonzero coefficients, which corresponds to a reduced multiscale model to represent the class of solution $u_h^{n+1}$ that we are interested in.

Numerical examples are presented in Section \ref{section_W_2_R} to verify this claim, from which we actually observe that $s$ can be taken as a fraction of the original number of multiscale basis $m$ to give the approximation in \eqref{reduce_verify}.

\section{Numerical experiment}\label{section_example}
In this section, we present numerical examples in support of the previous discussion on the reduced-order neural network. Specifically, Section \ref{section_same_function} demonstrates the $\mathcal{L}(\cdot)$ and $\mathcal{\hat{L}(\cdot)}$ functions similarly on a subspace by comparing the eigenvalues of the two operators; Section \ref{section_one_layer} demonstrates that a one-layer soft thresholding neural network can accurately recover a linear dynamics with a sparse coefficient vector; Section \ref{section_W_2_R} then uses the learned coefficient $W^2$  from the one-layer neural network to conduct model reduction as describe in Section \ref{model_reduction_section}; Section \ref{multi-layer} later presents the predicting results for multi-layer reduced-order neural network which corresponds to Section \ref{sec:full_nn}; and Section \ref{section_clustering} applies the clustering scheme to nonlinear process modeling.
All the network training are performed
using the Python deep learning API TensorFlow \cite{tensorflow2015-whitepaper}.
%\subsection{Reduced-order neural network}
\subsection{$\mathcal{\hat{L}} \approx\mathcal{L}$ in a subspace of $\mathbb{R}^m$}\label{section_same_function}

We recall the one-layer neural network for a single-step linear dynamics in \eqref{eq:one_layer}
\begin{equation*}
\mathcal{NN}(\mathbf{x}): = W^2S_\gamma(W^1 \mathbf{x} + b),
\end{equation*}
and the definition of $\mathcal{\hat{L}}(\cdot)$ and $\mathcal{L}(\cdot)$ in \eqref{eq:hat_L} and \eqref{eq:def_L} respectively:
\begin{equation*}
 \mathcal{\hat{L}}(\mathbf{x}) := {\hat{W}} \mathbf{x} + {\hat{b}},\qquad
\mathcal{L}(\mathbf{x}) = W^2 W^1 \mathbf{x}+ W^2 b,
\end{equation*}
where ${\hat{W}_n}$ and ${\hat{b}_n}$ are defined as in \eqref{eq:param}, while $W_n^2, W_n^1$ are trained parameters of $\mathcal{NN}(\cdot)$.  We also recall \eqref{eq:part2}:
\begin{equation*}
\mathcal{{L}}|_{S} \approx \mathcal{\hat{L}}|_{S},
\end{equation*}
for $S\subset \mathbb{R}^m$.

To support this claim, we design a special subspace $S\subset \mathbb{R}^m$.
For $r<m$, we then let
\begin{equation}
S = V_r := \text{span}\{v_i, 1\leq i\leq r\} \quad\subset\quad \text{span}\{v_i, 1\leq i\leq m\} = \mathbb{R}^m,
\end{equation}
where $\{v_i\}_{i=1}^m$ are eigenvectors of $\hat{W}$ corresponding to eigenvalues $\lambda_i$ in descending order. We also define matrix $V$ as
\begin{equation}
V := [v_1, v_2,\cdots, v_m].
\end{equation}

We then randomly pick training input $U \in V_s$ such that $U = \sum_{i=1}^{r} c_iv_i$. The $\mathcal{NN}(\cdot)$ is then trained with $(U, \mathcal{\hat{L}}(U))$-like training pairs, and we obtain a corresponding operator $\mathcal{L}(\cdot) $ with trained coefficients. The linear operators of $\mathcal{L}(\cdot) $ and $\mathcal{\hat{L}}(\cdot) $ are compared by their eigenvalues,  i.e. $V^T \hat{W} V $ and $V^T W^2W^1 V $. By the definition of $V$, the former will exactly be a diagonal matrix with $\lambda_i$ be its diagonal value. We expect $W^2W^1$ functions similarly to $\hat{W} $ on $V_r$, and further the $r$-by-$r$ sub-matrix of $V^T W^2W^1 V $ should be similar to that of $V^T \hat{W} V$.

Figure \ref{fig:hat_L_L} compares $V^T W^2W^1 V $ and $V^T \hat{W} V$ for the case when $V_r$ is constructed letting $r= 30$. We can tell that the first $30\times 30 $ submatrix are very much alike. That is to say, despite the fact that the operator $\mathcal{L}(\cdot)$ and $\mathcal{\hat{L}}(\cdot)$ are different on $\mathbb{R}^m$, their behavior on the subspace $V_r$ are the same. Moreover, Figure \ref{fig:eigen-value}, shows that such similarity only exist in $V_r$ as for the $i_{th}$ diagonal values of $V^T W^2W^1 V $ and that of $V^T \hat{W} V$ distinct when $i>s$. This also makes sense as the operator $\mathcal{L}(\cdot)$ is defined from the trained parameters of $\mathcal{NN}(\cdot)$ where only subspace $V_r$ is visible to the network.

\begin{figure}[htbp]
	%	\subfigure[U1 coordinate]{./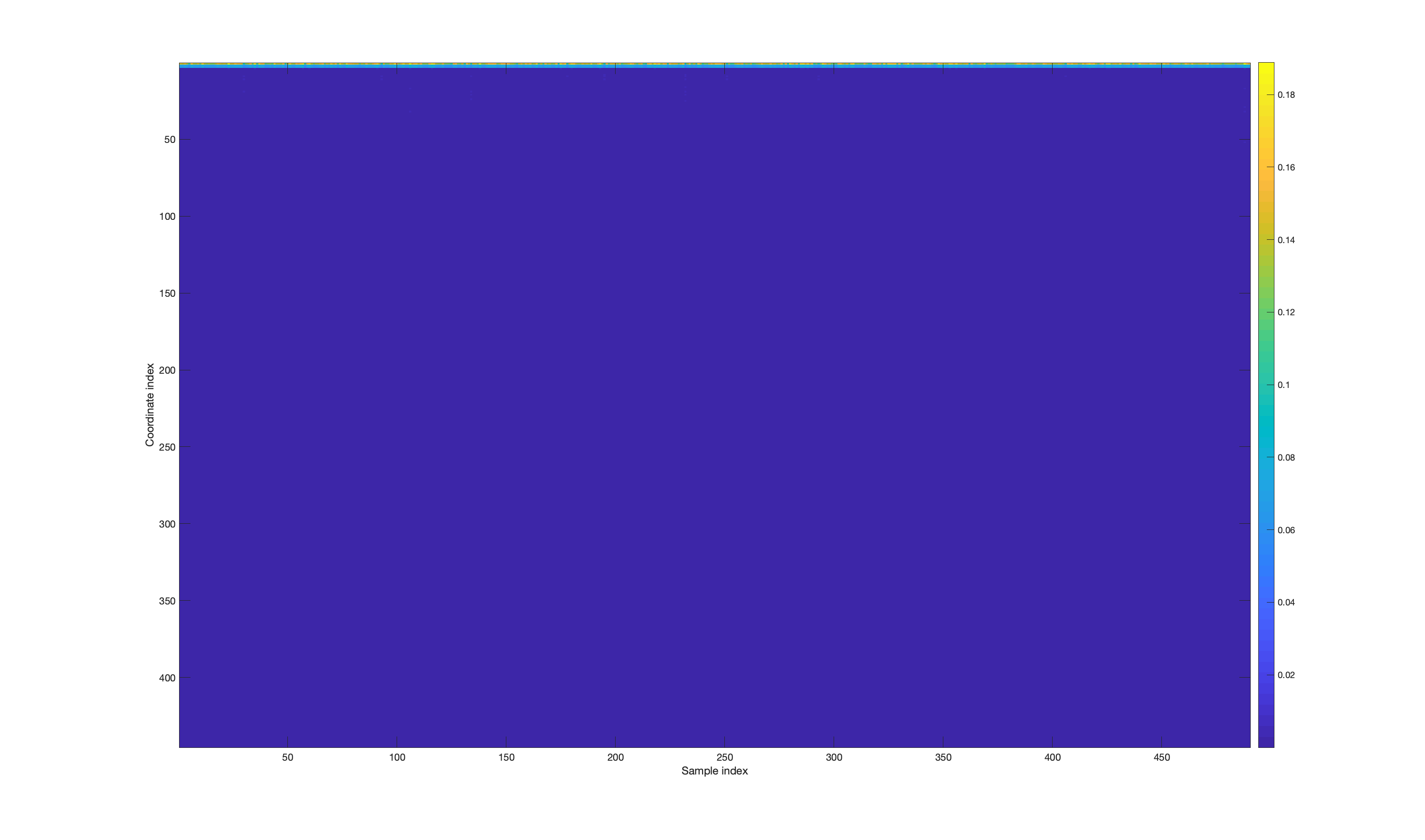}
	\centering
	
	\begin{subfigure}[t]{0.35\textwidth}
		\includegraphics[width=\textwidth]{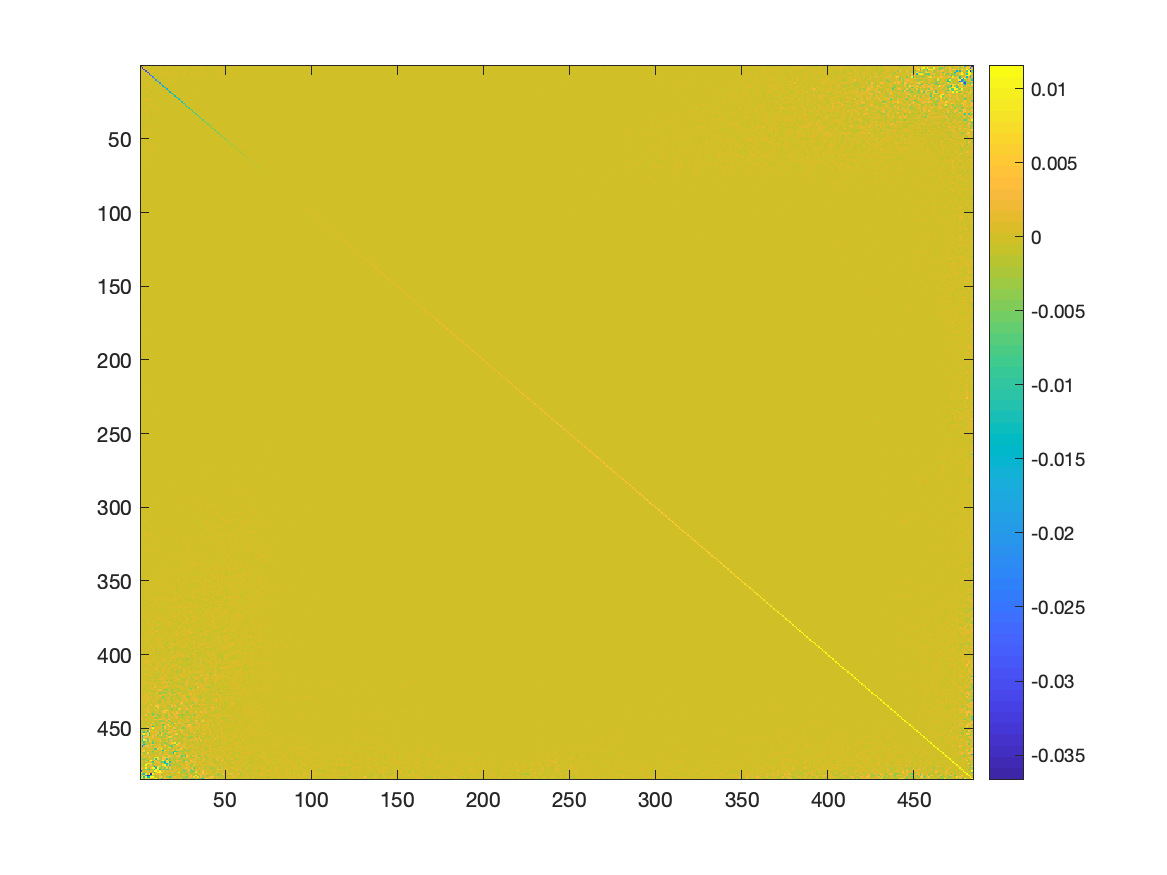}
		\caption{$V^T\hat{W}V $}
		\label{fig:ture_whole}
	\end{subfigure}
	~
	\begin{subfigure}[t]{0.35\textwidth}
		\includegraphics[width=\textwidth]{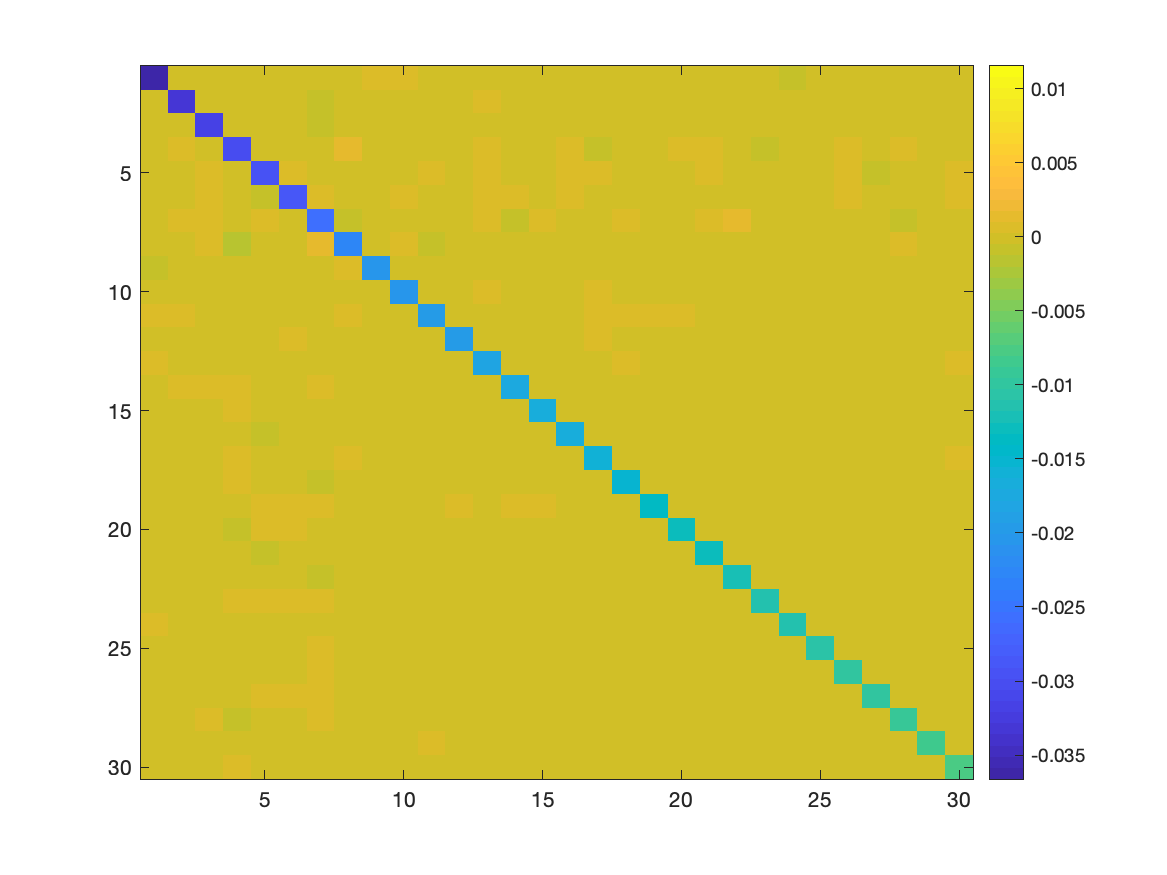}
		\caption{$30\times30$ sub-matrix of $V^T\hat{W}V $ }
		\label{fig:30_true}
	\end{subfigure}

	\begin{subfigure}[t]{0.35\textwidth}
		\includegraphics[width=\textwidth]{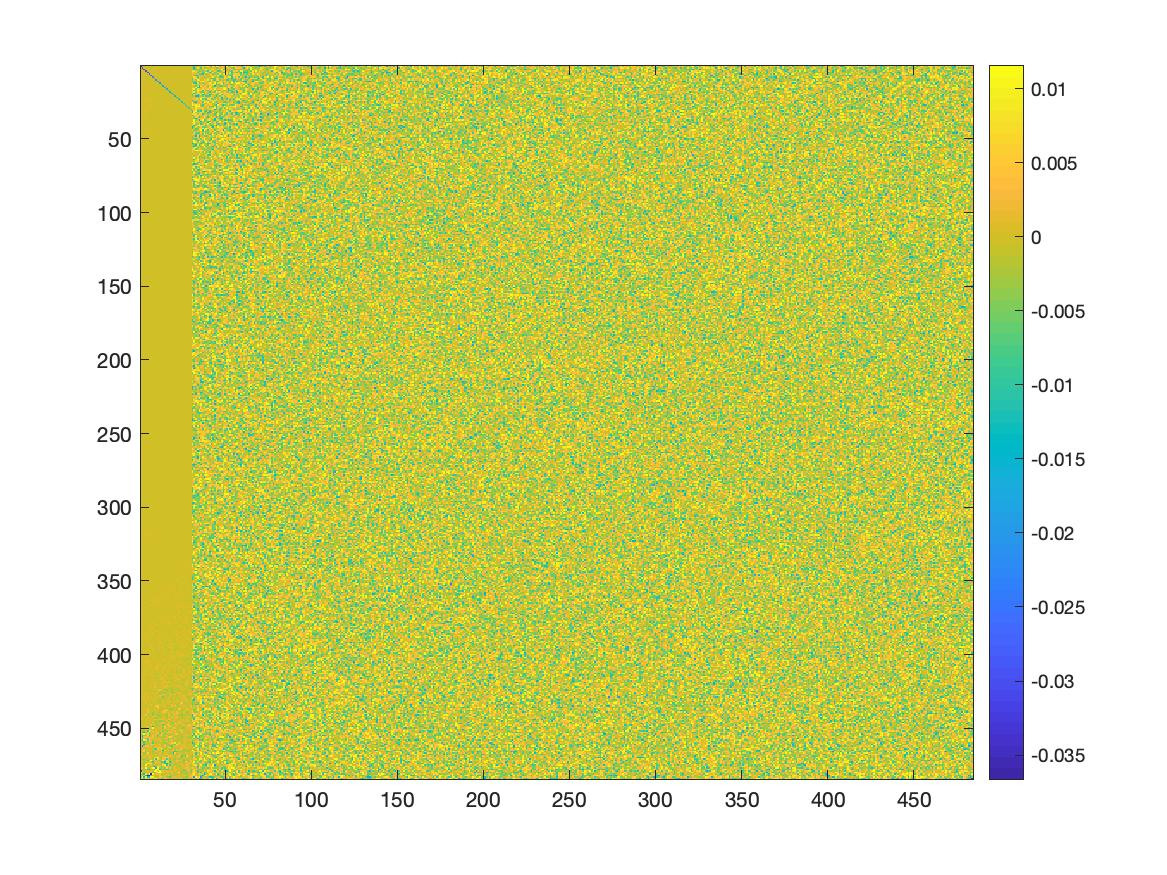}
		\caption{$V^T W^2W^1 V $}
		\label{fig:learn_whole}
	\end{subfigure}
	~
	\begin{subfigure}[t]{0.35\textwidth}
		\includegraphics[width=\textwidth]{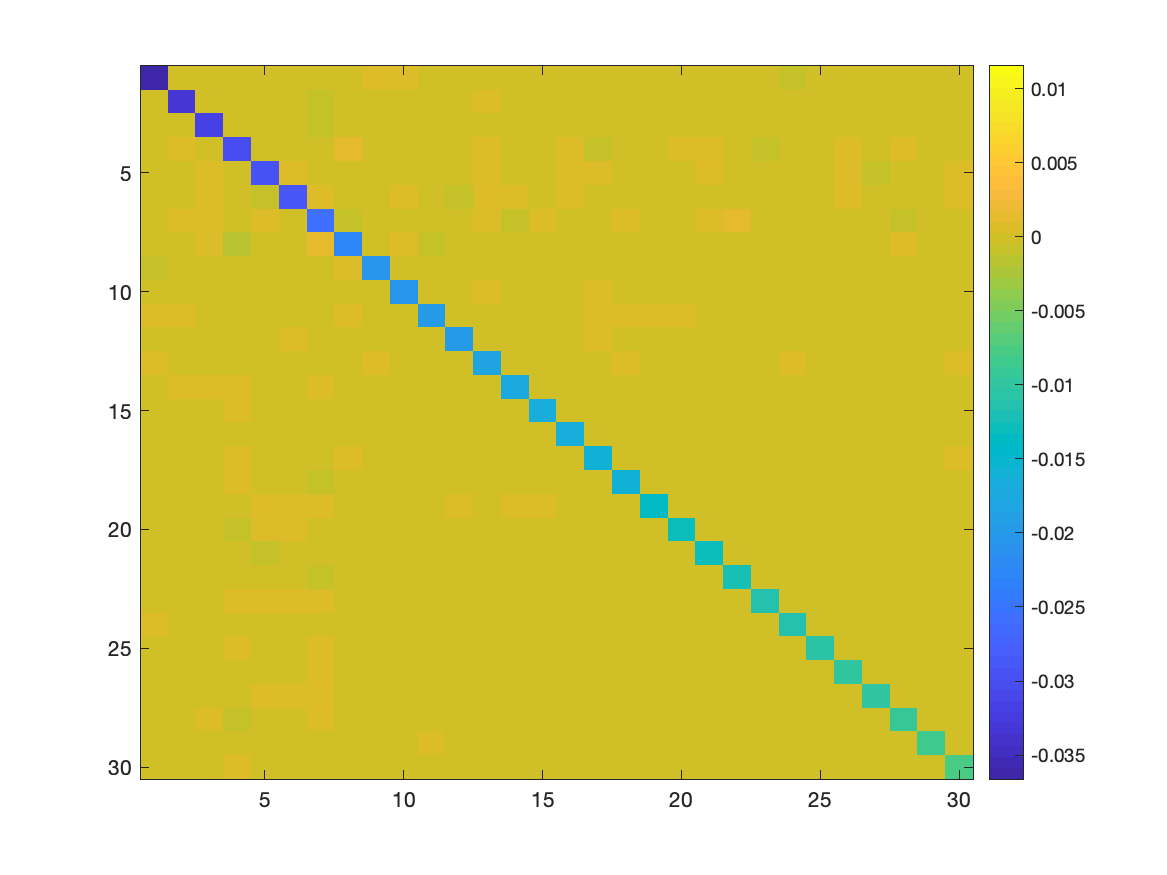}
		\caption{$30\times30$ sub-matrix of $V^T W^2W^1 V $ }
		\label{fig:30_learn}
	\end{subfigure}
	~	\caption{$\hat{W}$ and $W^2W^1$ function similarly on $S$, where $r = 30$.}
	\label{fig:hat_L_L}
\end{figure}

\begin{figure}[htbp]
	\centering
	\includegraphics[width= 0.5\textwidth]{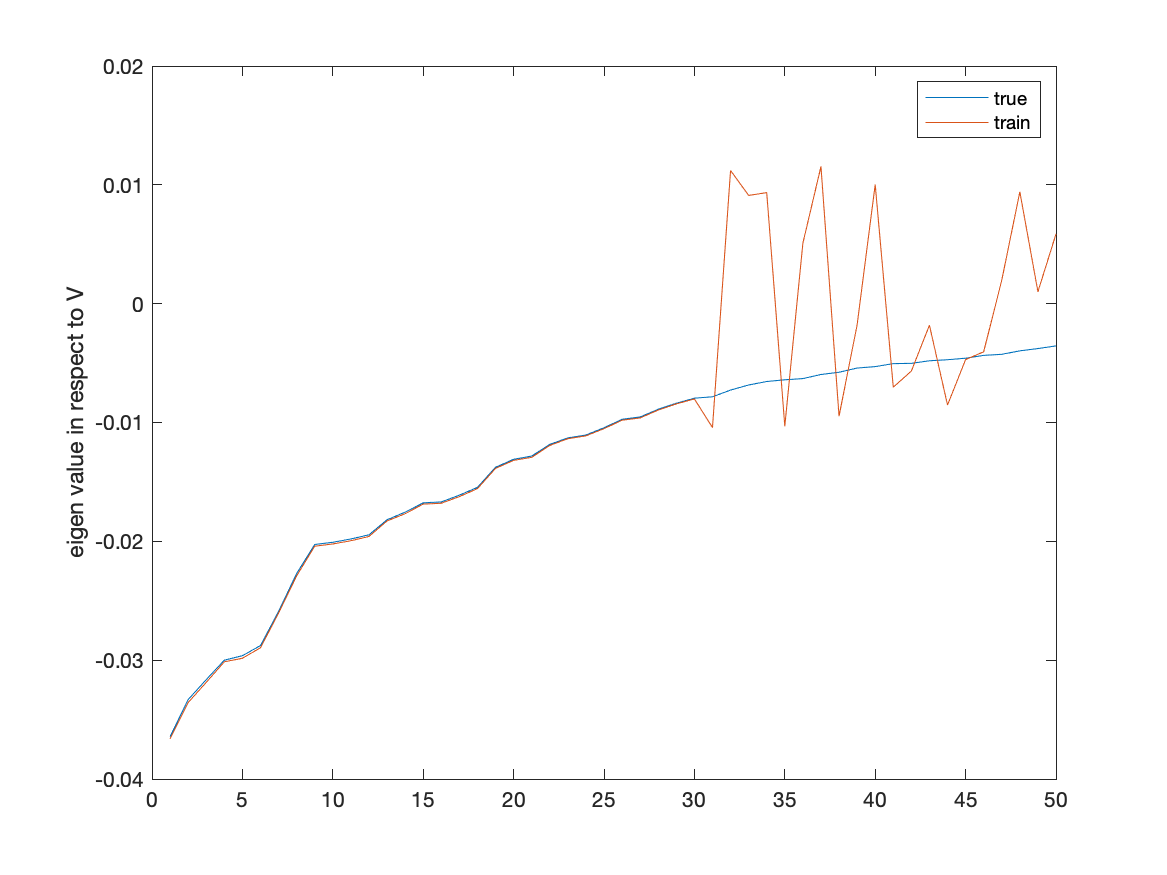}
	\caption{Comparison of eigen-values of $\hat{W}$ and $W^2W^1$  when $r = 30$.}
	\label{fig:eigen-value}
\end{figure}

\newpage
\subsection{One-layer reduced order neural network}\label{section_one_layer}
In this example, we consider the one-layer reduced-order neural network as defined in \eqref{eq:one_layer}. We use this neural network to predict a one-step fluid dynamics, where the data are taken to be NLMC solution coefficients to \eqref{eq:flow} in the form $(U^0_i,U^1_i )$. We fix $\kappa(t,x)$ and $f(t, x)$ among samples, thus all data describes linear dynamics for different initial conditions.

We take $2\%$ out of all data pairs as testing samples and the remaining $98\%$ as training samples and use only the training sample to train $\mathcal{NN}(\cdot)$. We then evaluate the neural network by examining the accuracy of the following approximation for the unseen testing samples:
$$U^1\approx \mathcal{NN}(U^0).$$
The $\ell_2 $ relative error of the prediction is computed by
\begin{equation}\label{l2_error}
\frac{||U^1- \mathcal{NN}(U^0)||_{2}}{||U^1||_{2}}.
\end{equation}

Table \ref{tab:single_layer_prediction_error} is the error table for the case when we use $500$ data pairs with $490$ to be training samples and $10$ to be testing samples. These data are generated with different choice of initial condition $U_i^0$. To match the realistic physical situation, we took all initial conditions to be the NLMC terminal pressure of a mobility driven nonlinear flow process.

\begin{table}[htbp]
 \centering

    \begin{tabular}{|c|r|}
    	\hline
    Sample Index& Error(\%)\\
    	\hline
    \#1    & 0.25 \\
     \#2    & 0.43 \\
      \#3     & 10.02 \\
      \#4     & 9.91 \\
     \#5    & 3.90 \\
      \#6     & 8.18 \\
    \#7   & 17.27 \\
    \#8  & 1.57 \\
     \#9   & 1.13 \\
   \#10  & 0.76 \\
    \hline
      Mean    & 5.34 \\
          \hline
    \end{tabular}%
  \caption {$\ell_2$ relative error of $\mathcal{NN}(\cdot)$ prediction.}
  \label{tab:single_layer_prediction_error}%
\end{table}%

From Table \ref{tab:single_layer_prediction_error}, we can see that the prediction of our proposed network $\mathcal{NN}(U^0)$ is rather effective with an average $\ell_2$ error of $5.34\%$.

We also verify that $U^1$ is sparse in the learned $W^2$-system for all data(training and testing). We first reorder the columns of $W^2$ by their dominance as discussed in Section \ref{model_reduction_section}, then compute the corresponding $W^2$-system coefficients $U^{1,W^2}$, which should be a roughly decreasing vector. From Figure \ref{one_step_sparse}, we can tell that the $W^2$-system coefficients $U^{1,W^2}$ are sparse. This can be an reflection of successful learning of $\hat{W^2}$ in Assumption \ref{assum1}. Moreover, only a few dominant modes are needed to recover the solution as the quadratic averages of coordinates $|U^{1,W^2}_j|$ decays fast when $j >100$.

\begin{figure}[htbp]
	%	\subfigure[U1 coordinate]{./fig/U1_coordinate.png}
	\centering
	\begin{subfigure}[b]{0.45\textwidth}
		\includegraphics[width=\textwidth]{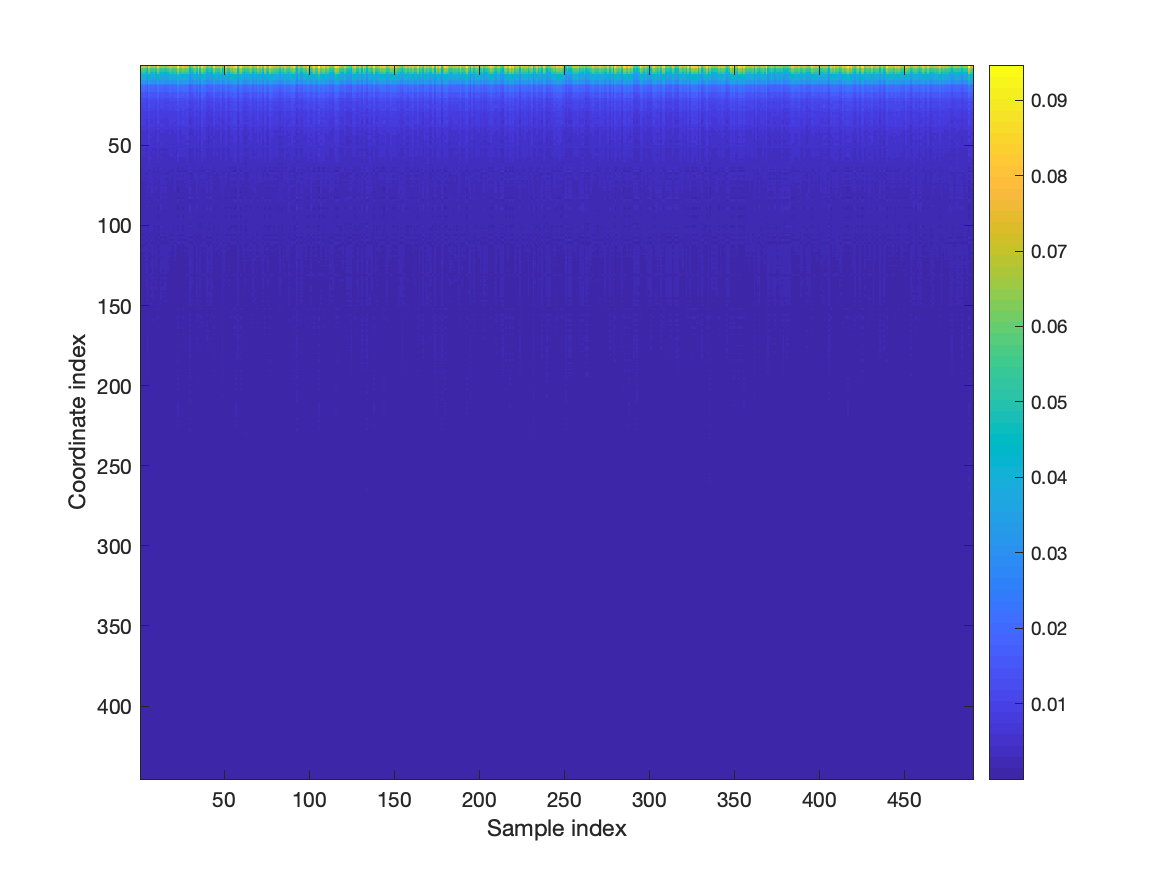}
		\caption{$U^{1,W^2}$ for all data samples }
		\label{fig:gull}
	\end{subfigure}
	~
	\begin{subfigure}[b]{0.45\textwidth}
		\includegraphics[width=\textwidth]{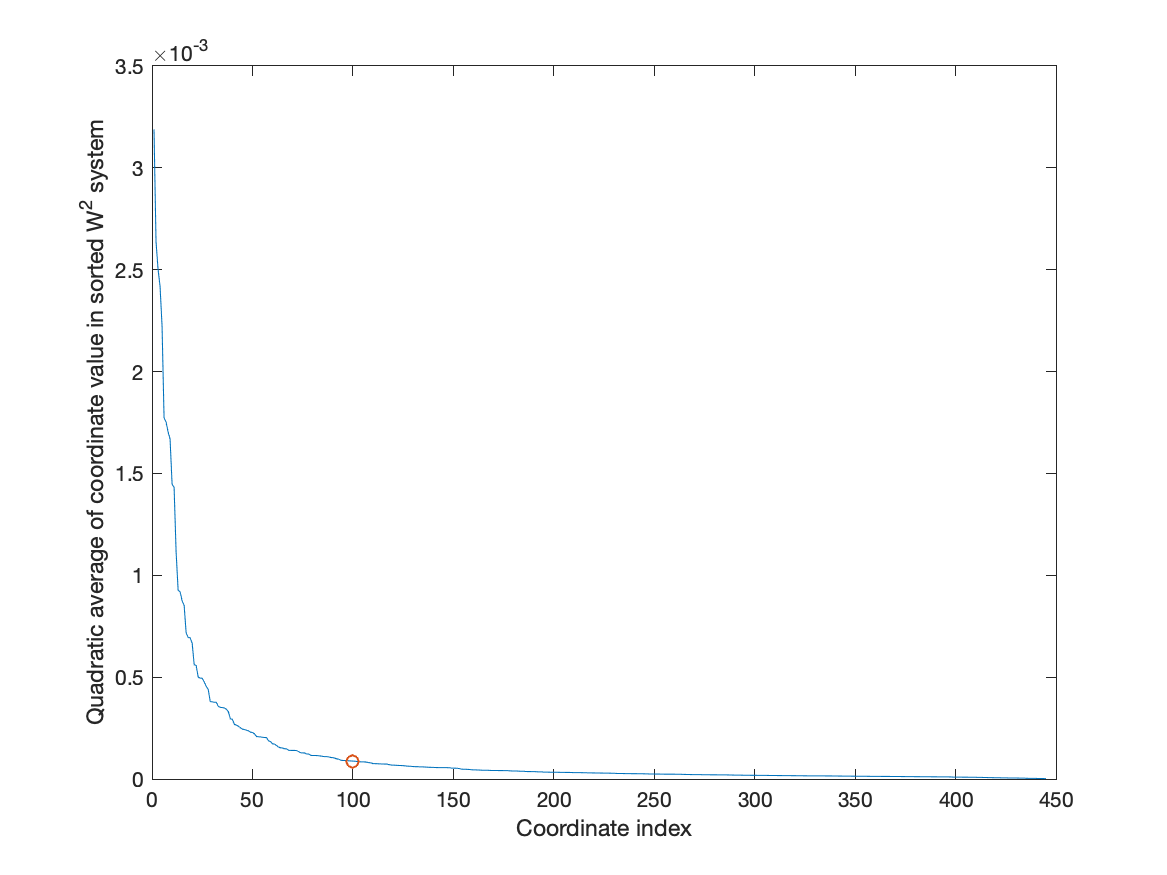}
		\caption{ quadratic average of $U^{1,W^2}$ among all data samples}
		\label{fig:gull}
	\end{subfigure}
	~
	\caption{Sparsity of the output of $\mathcal{NN}(U^0)$ in $W^2$ .}
	\label{one_step_sparse}
\end{figure}

In a word, the proposed neural network can indeed learn the dominant multiscale modes needed to represent $U^{1}$ from training data while properly reproduce the map between $U^0$ and $U^1$.

\subsection{ Model reduction with $W^2$}\label{section_W_2_R}
As discussed in Section \ref{model_reduction_section}, we would like to used the reduced-order system $W^{2,s}$ to further conduct model reduction. The reduced-order solution coefficient is defined with the reduced-order linear operator $\mathcal{L}_s(\cdot)$:
\begin{equation}
U^{n+1}_{sN} =\mathcal{L}_s (U^n).
\end{equation}
Noticing that $U^{n+1}_{sN}$ is the coefficient of $u_h^{n+1}$ in the original basis system $\{\phi_j\}_{j=1}^{m}$ while it is sparse in $W^2$-system, i.e, $U^{n+1,W^2}_{sN}$ is sparse and has maximum $s$ nonzero elements.

The numerical experiments is conduced based on a one-layer neural network as defined in \eqref{eq:one_layer} for a one-step linear dynamics. We would like to compare the following coefficient vectors:
$$U^1_{sN} :=\mathcal{L}_s (U^0) = W^{2,s} W^1\cdot U^0+ W^{2,s}b, $$
$$U^1_{\text{true}} = \hat{W}U^0+  \hat{b},$$
and
 $$U^1_{N} := \mathcal{NN}(U^0).$$
 Here $U^1_{\text{true}} $ is the true solution to \eqref{eq:discrete}, while
$U^1_{N}$ is the prediction of $\mathcal{NN}(\cdot)$.

\begin{figure}[htbp]
	\centering
	\includegraphics[width=0.5\textwidth]{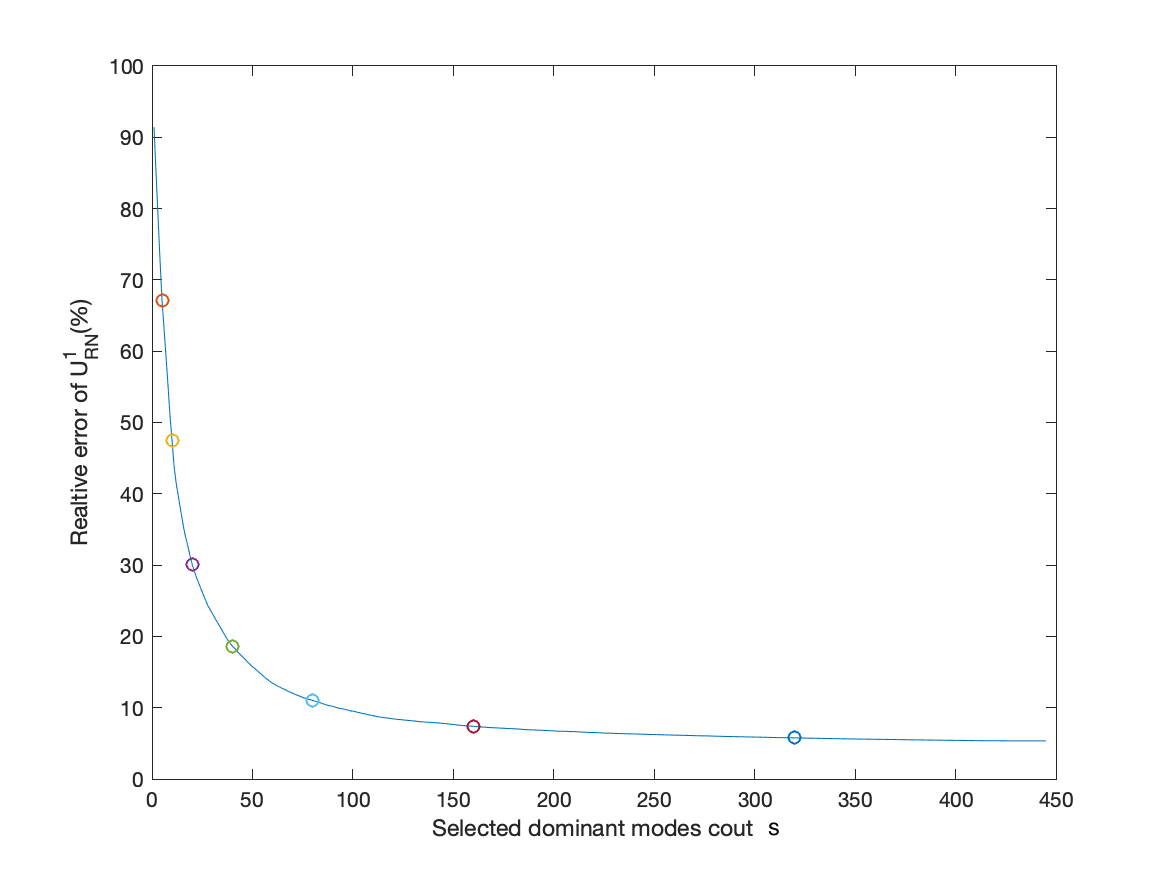}
	\caption{Decay of relative error $\frac{||U^1_{sN}-U^1_{\text{true}}||_{2}}{||U^1_{\text{true}}||_{2}}$ as $s$ grows.}
	\label{fig:error_decay}
\end{figure}

\begin{table}[htbp]
	\centering
	\begin{tabular}{|c|r|r|r|r|r|r|r|}
		\hline
		\diagbox{Sample Index}{$s$}  & 5     & 10    & 20    & 40    & 80    & 160   & 320 \\
		\hline
		\#1    & 66.71 & 46.80 & 29.02 & 17.01 & 8.44  & 3.53  & 1.11 \\
		\#2   & 66.55 & 46.58 & 29.03 & 17.41 & 9.48  & 4.62  & 1.59 \\
		\#3   & 66.35 & 46.77 & 29.70 & 18.59 & 12.26 & 10.92 & 10.16 \\
		\#4   & 67.95 & 48.58 & 31.23 & 19.87 & 12.59 & 10.03 & 9.90 \\
		\#5   & 66.22 & 46.31 & 28.99 & 17.73 & 10.49 & 6.49  & 4.34 \\
		\#6   & 66.47 & 46.76 & 29.40 & 17.89 & 10.86 & 9.12  & 8.33 \\
		\#7   & 69.59 & 51.33 & 36.01 & 27.42 & 22.33 & 18.62 & 17.29 \\
		\#8   & 66.60 & 46.61 & 28.81 & 16.39 & 7.47  & 3.86  & 1.96 \\
		\#9    & 66.90 & 47.13 & 29.21 & 16.94 & 8.02  & 3.58  & 1.67 \\
		\#10    & 67.14 & 47.39 & 29.44 & 17.23 & 8.23  & 3.19  & 1.32 \\
		\hline
		Mean& 67.05 & 47.43 & 30.08 & 18.65 & 11.02 & 7.40  & 5.77 \\
		\hline
	\end{tabular}%
	\caption{Decay of error $\frac{||U^1_{sN}-U^1_{\text{true}}||_{2}}{||U^1_{\text{true}}||_{2}}$ with respect to number of selected dominant modes in $W^{2,s}$. }
	\label{tab:error_decay}%
	
\end{table}%

Figure \ref{fig:error_decay} shows the error decay of $U^{1}_{sN}$ compared to $U^{1}_{\text{true}}$. As $s$ grows, the error gets smaller. This figure actually verified \eqref{reduce_verify}, i.e. the reduced operator $\mathcal{L}_s(\cdot)$ can approximate $\mathcal{\hat{L}}(\cdot)$.Moreover, this approximation gets more accurate  as $s$ gets larger. The error in Figure \ref{fig:error_decay} at $s =m= 445$ is also expected that it is a consequence of the training error. Besides, we observe that the error decays fast when $s >40$ for our training samples.

Table \ref{tab:error_decay} further facilitates such conclusion. The error of $U^{1}_{sN}$ is less than $12\%$ when $s \geq 80$. We can thus represent the multiscale solution $u_h^1$ using $s =80$ basis with little sacrifice in the solution accuracy. We notice that the order of the reduce operator $\mathcal{L}_s(\cdot)$  is only around $18\%$ that of the original multiscale model.

We lastly present the comparison between $U^{1}_{\text{true}}$, $U^{1}_{N}$, and $U^{1}_{sN}$.From Table \ref{tab:comparsion_error}, we can tell that for a single testing sample, we have
$$\frac{||U^1_{sN}-U^1_{\text{true}}||_{2}}{||U^1_{\text{true}}||_{2}}> \frac{||U^1_{N}-U^1_{\text{true}}||_{2}}{||U^1_{\text{true}}||_{2}}, \quad \frac{||U^1_{sN}-U^1_{\text{true}}||_{2}}{||U^1_{\text{true}}||_{2}}> \frac{||U^1_{sN}-U^1_{N}||_{2}}{||U^1_{N}||_{2}},$$
which is as expected since $\frac{||U^1_{N}-U^1_{\text{true}}||_{2}}{||U^1_{\text{true}}||_{2}} $and $\frac{||U^1_{sN}-U^1_{N}||_{2}}{||U^1_{N}||_{2}}$ are exactly the two components of error $\frac{||U^1_{sN}-U^1_{\text{true}}||_{2}}{||U^1_{\text{true}}||_{2}}$.  They stands for neural network prediction error and $\mathcal{L}_s$ truncation error, respectively. The latter can be reduce by increasing $s$, while the former one can only be improved with more effective training.

\begin{table}[htbp]
	\centering
	\begin{tabular}{|c|r|r|r|}
		\hline
		Sample Index &$\frac{||U^1_{N}-U^1_{\text{true}}||_{2}}{||U^1_{\text{true}}||_{2}}$ & $\frac{||U^1_{sN}-U^1_{\text{true}}||_{2}}{||U^1_{\text{true}}||_{2}}$& $\frac{||U^1_{sN}-U^1_{N}||_{2}}{||U^1_{N}||_{2}}$ \\
		\hline
	 \#1    & 0.25  & 6.48  & 6.41 \\
	 \#2   & 0.43  & 7.65  & 7.55 \\
	 \#3   & 10.02 & 11.59 & 5.81 \\
	 \#4   & 9.91  & 11.28 & 5.83 \\
	 \#5   & 3.90  & 8.93  & 8.38 \\
	 \#6   & 8.18  & 10.00 & 5.80 \\
	 \#7  & 17.27 & 20.98 & 10.25 \\
	 \#8   & 1.57  & 5.85  & 5.83 \\
	 \#9    & 1.13  & 6.13  & 6.03 \\
	 \#10    & 0.76  & 6.17  & 6.12 \\
	\hline
	Mean & 5.34  & 9.50 & 6.80 \\
		\hline
	\end{tabular}%
	\caption{ Relative error percentage of solutions obtained in full $W^2$ system and reduced-order system $W^{2,s}$ for $s = 100$. }
	\label{tab:comparsion_error}%
\end{table}%

\subsection{Multi-layer reduced order Neural Network}\label{multi-layer}
In this example, we use a multi-layer reduced-order neural network $\mathcal{NN}(\cdot)$ to predict multi-step fluid dynamics. Recall \eqref{eq:w2_multi_layer}, it is defined as
\begin{equation*}
\mathcal{NN}(\mathbf{x}^0) := \mathcal{N}^n(\cdots \mathcal{N}^1(\mathcal{N}^0(\mathbf{x}^0))).
\end{equation*}
The input of $\mathcal{NN}(\cdot)$ is taken to be $U^0$, the initial condition, while the outputs are the collection of outputs at $n_{th}$-layer sub-network $\mathcal{N}^n(\cdot)$ which correspond to the true values [$U^1, U^2, \cdots, U^9$].  $U^{n+1}$ are all taken to be NLMC solutions of \eqref{eq:flow} at time step $n$ for $n =0,\cdots, 8$. Prediction accuracy is measured with $\ell_2$ relative error that defined similar to \eqref{l2_error}.

% Table generated by Excel2LaTeX from sheet 'error_table'
\begin{table}[htbp]
  \centering
    \begin{tabular}{|c|r|r|r|r|r|r|r|r|r|}
    	          \hline
        Sample Index  &    $U^1$   &     $U^2$  &     $U^3$   &   $U^4$ &   $U^5$ &     $U^6$   &     $U^7$   &   $U^8$    &       $U^9$   \\
          \hline
    \#1    & 1.62  & 1.17  & 1.69  & 1.91  & 1.95  & 1.92  & 1.92  & 1.91  & 1.96 \\
     \#2   & 3.33  & 1.86  & 2.10  & 1.98  & 2.15  & 1.53  & 1.04  & 0.94  & 0.77 \\
     \#3   & 11.62 & 13.32 & 9.39  & 9.57  & 8.89  & 10.36 & 11.67 & 11.86 & 12.97 \\
     \#4   & 9.74  & 9.00  & 4.21  & 3.45  & 3.46  & 3.17  & 2.93  & 2.87  & 2.81 \\
     \#5   & 5.63  & 4.68  & 2.65  & 2.28  & 2.52  & 1.89  & 1.49  & 1.74  & 1.91 \\
     \#6   & 9.54  & 11.50 & 9.30  & 9.70  & 9.14  & 10.50 & 11.73 & 12.00 & 13.09 \\
     \#7   & 21.46 & 14.82 & 5.42  & 4.24  & 3.27  & 5.06  & 6.52  & 6.77  & 7.81 \\
     \#8   & 5.72  & 1.40  & 0.67  & 0.77  & 1.11  & 1.49  & 1.95  & 2.54  & 3.20 \\
     \#9    & 4.03  & 2.16  & 3.21  & 3.66  & 3.47  & 4.36  & 5.15  & 5.35  & 5.97 \\
     \#10    & 4.62  & 1.01  & 3.14  & 3.84  & 3.81  & 4.47  & 5.05  & 5.24  & 5.68 \\
              \hline
    Mean      & 7.73  & 6.09  & 4.18  & 4.14  & 3.98  & 4.47  & 4.95  & 5.12  & 5.62 \\
              \hline
    \end{tabular}%
  \caption{$\ell_2$ relative error of prediction of $U^{n+1}$ using $\mathcal{NN}(\cdot)$.}
    \label{tab:full_error}%
\end{table}%
In Table \ref{tab:full_error}, the columns shows the prediction error for $U^{n+1}, n=0,1,\cdots 8$ where the average error is computed for all time steps among testing samples which are less than $10\%$ in average.  Therefore, we claim that the proposed multi-layer reduced order neural network $\mathcal{NN}(\cdot)$ is effective in the aspect of prediction. We also claim that the coefficient $U^{n+1, W_n^2}$ for $n=0,1,\cdots, 8$ are sparse in the independent systems $W^{2}_n$. These systems are again learned simultaneously by training $\mathcal{NN}(\cdot)$.
%\newpage

%\subsubsection{Sparsity of $U^{n+1}$}
%
%\begin{figure}[htbp]
%%	\subfigure[U1 coordinate]{./fig/U1_coordinate.png}
%	\centering
%	\begin{subfigure}[b]{0.45\textwidth}
%		\includegraphics[width=\textwidth]{U1_coordinate}
%		\caption{$U^1$ in $W_1^2$-coordinate}
%		\label{fig:gull}
%	\end{subfigure}
%~
%\begin{subfigure}[b]{0.45\textwidth}
%	\includegraphics[width=\textwidth]{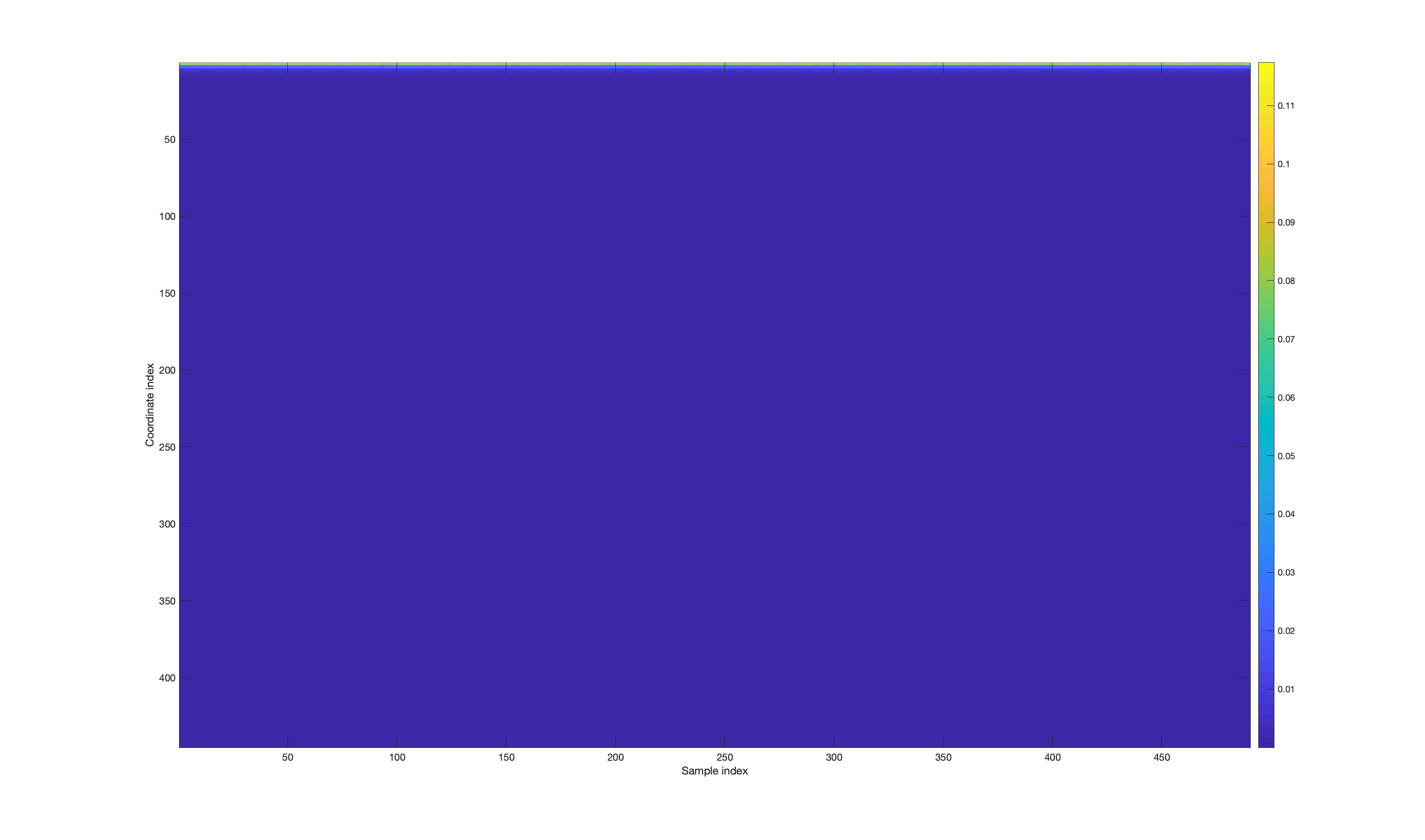}
%	\caption{$U^9$ in $W_9^2$-coordinate}
%	\label{fig:gull}
%\end{subfigure}
%~
%
%	\begin{subfigure}[b]{0.45\textwidth}
%		\includegraphics[width=\textwidth]{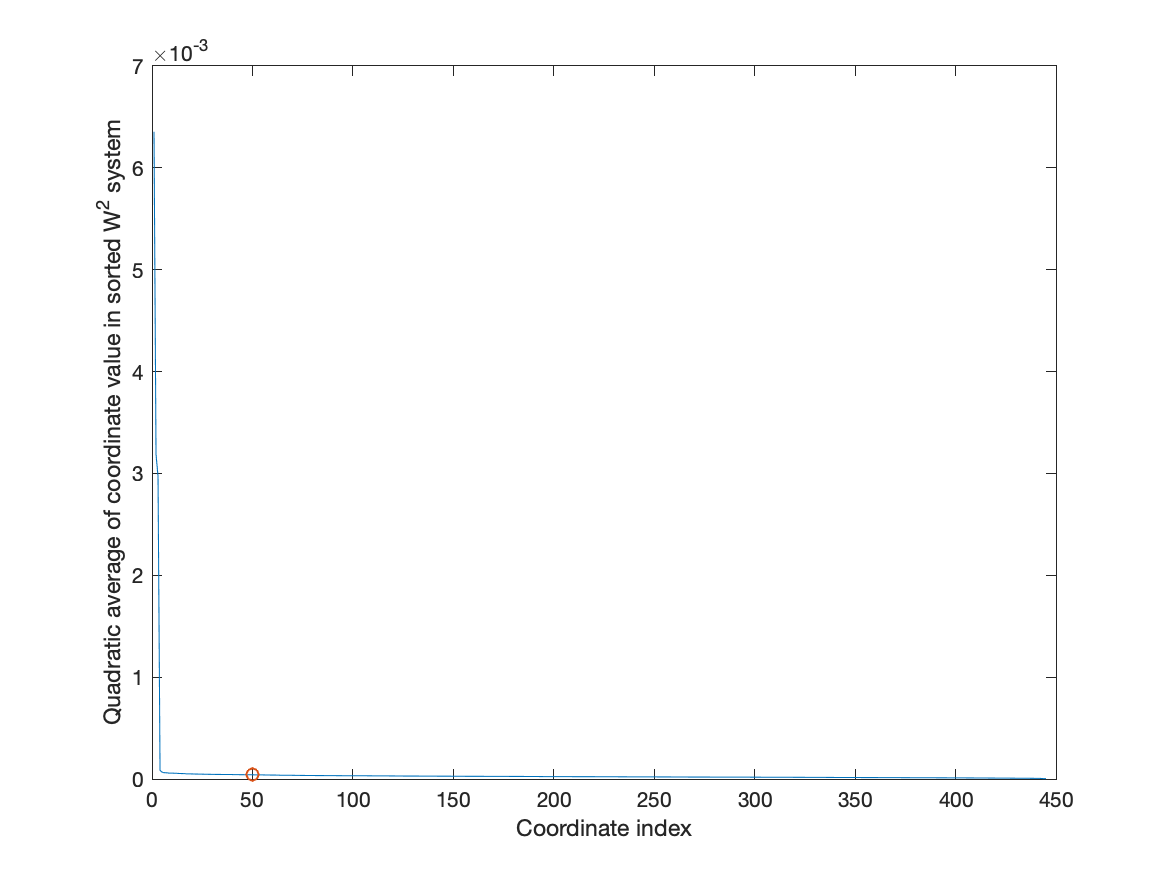}
%		\caption{$U^1$ in $W_1^2$-coordinate quadratic average}
%		\label{fig:gull}
%	\end{subfigure}
%~
%	\begin{subfigure}[b]{0.45\textwidth}
%		\includegraphics[width=\textwidth]{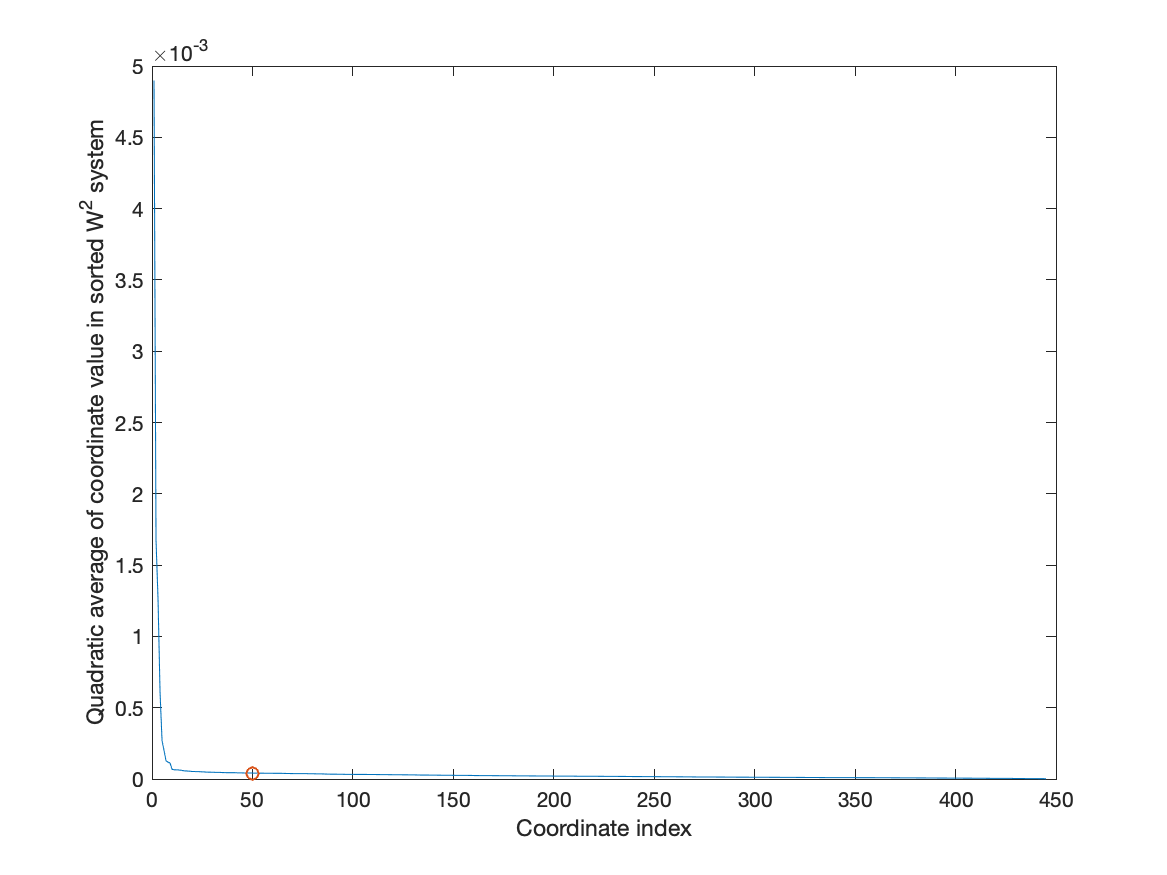}
%		\caption{$U^9$ in $W_9^2$-coordinate quadratic average}
%		\label{fig:gull}
%	\end{subfigure}
%\caption{Sparsity of output of $\mathcal{N}^i(U^0)$ in $W_i^2$, $i = 1,9$ }
%\end{figure}

\subsection{Clustering} \label{section_clustering}
In this experiment, we aim to model the fluid dynamics correspond to two different fractured media as shown in Figure \ref{fracture_network}. More specifically, the permeability coefficient of matrix region have $\kappa_m = 1$ and the permeability of the fractures are $\kappa_f = 10^3$. The one-step NLMC solutions pairs $(U^0, U^1)$ are generated following these two different configurations of fracture are then referred as ``Cluster 1'' and ``Cluster 2'' (see Figure \ref{fig:solutions} for an illustration). We will then compare the one-step prediction of networks $\mathcal {NN}_1 , \mathcal {NN}_2$ with that of $\mathcal{NN}_{\text{mixed}}$. The input are taken taken to be $U^0$, which are chosen to be the terminal solution of mobility driven 10-step nonlinear dynamics, while the output is an approximation of $U^1$.

$\mathcal {NN}_1 , \mathcal {NN}_2$ and $\mathcal{NN}_{\text{mixed}}$ share same one-layer soft thresholding neural network structure as in \eqref{eq:one_layer} while the first two network are trained with data for each cluster separately and the latter one is trained with mixed data for two clusters.

\begin{figure}[htbp]
	\centering
	\begin{subfigure}[b]{0.35\textwidth}
		\centering
		\includegraphics[width = \textwidth]{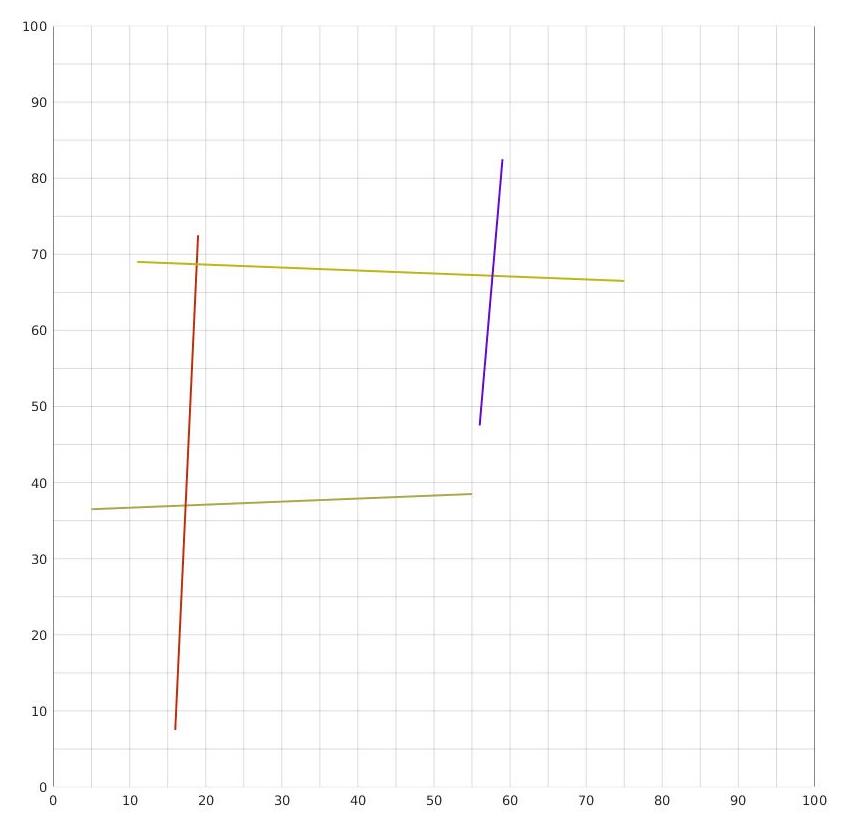}
		\caption{Cluster 1}
	\end{subfigure}%
	~
	\begin{subfigure}[b]{0.35\textwidth}
		\centering
		\includegraphics[width = \textwidth]{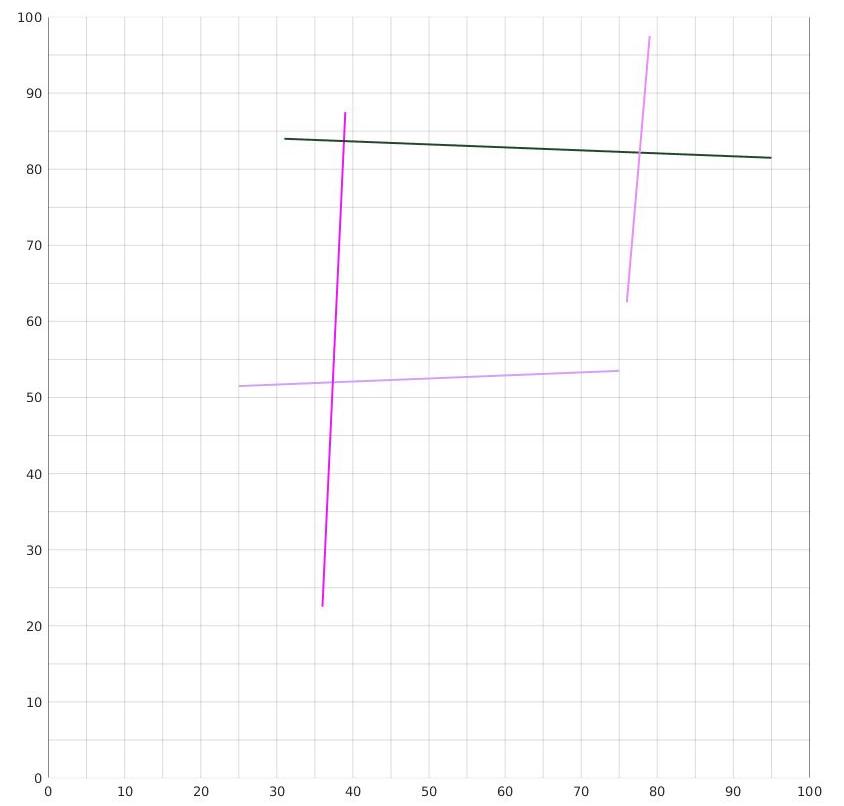}
		\caption{Cluster 2}
	\end{subfigure}
	\caption{Fracture networks for two clusters.}
	\label{fracture_network}
\end{figure}

\begin{figure}[htbp]
	\centering
	\begin{subfigure}[b]{0.35\textwidth}
		\centering
		\includegraphics[width = \textwidth]{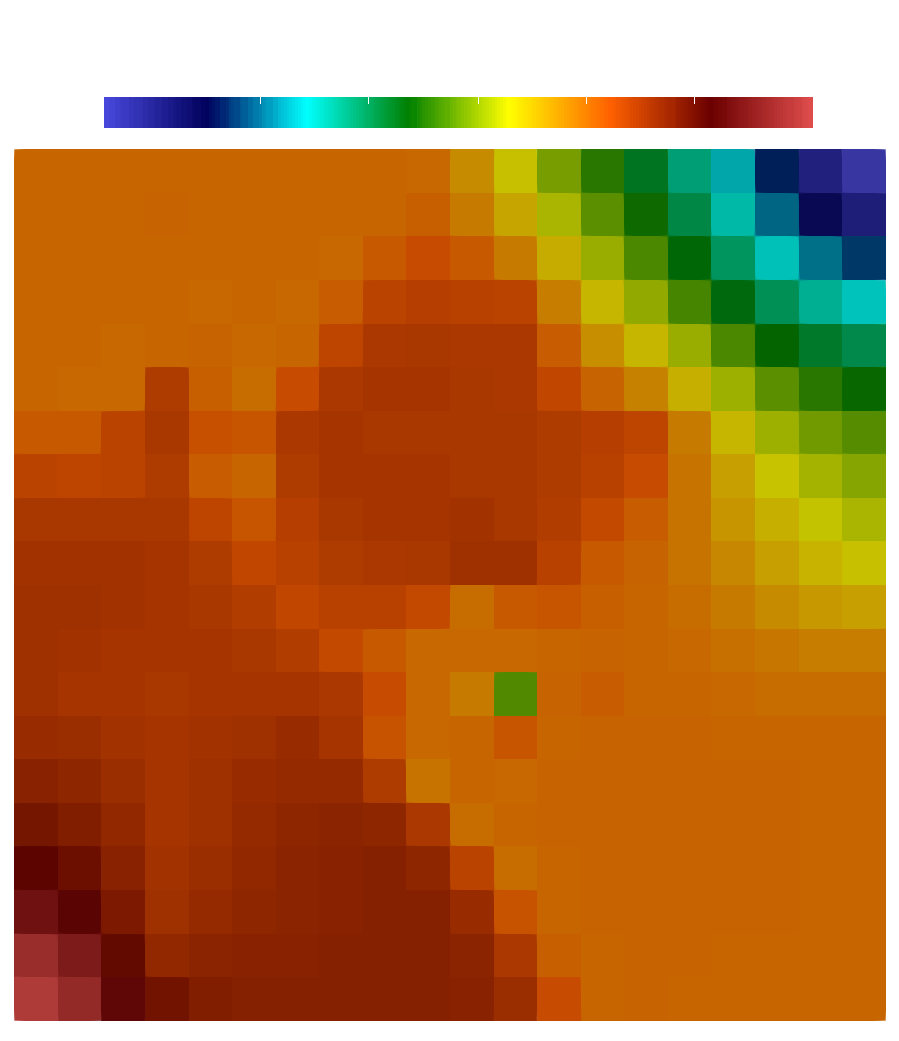}
		\caption{Coarse-scale NLMC solution $u^0$ -- Cluster 1}
	\end{subfigure}%
	~
	\begin{subfigure}[b]{0.35\textwidth}
		\centering
		\includegraphics[width = \textwidth]{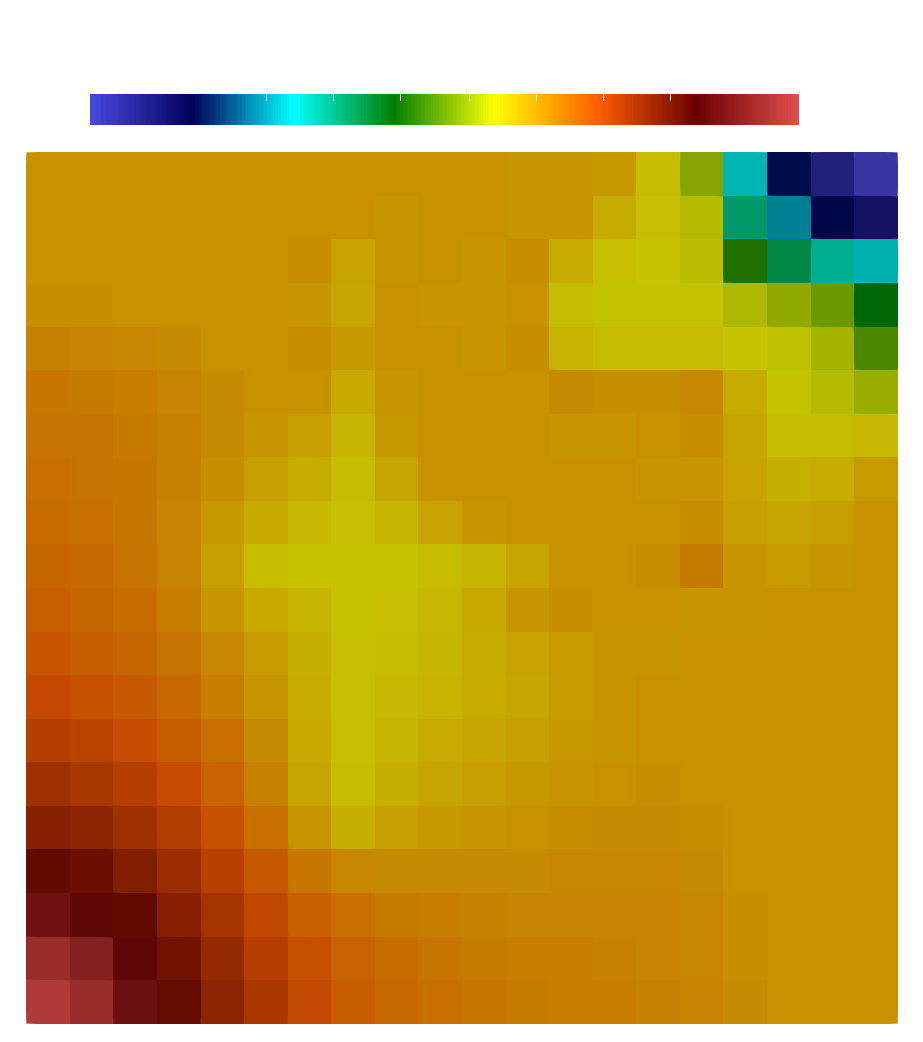}
		\caption{Coarse-scale solution of pressure $u^0$ -- Cluster 2}
	\end{subfigure}
	\begin{subfigure}[b]{0.35\textwidth}
		\centering
		\includegraphics[width = \textwidth]{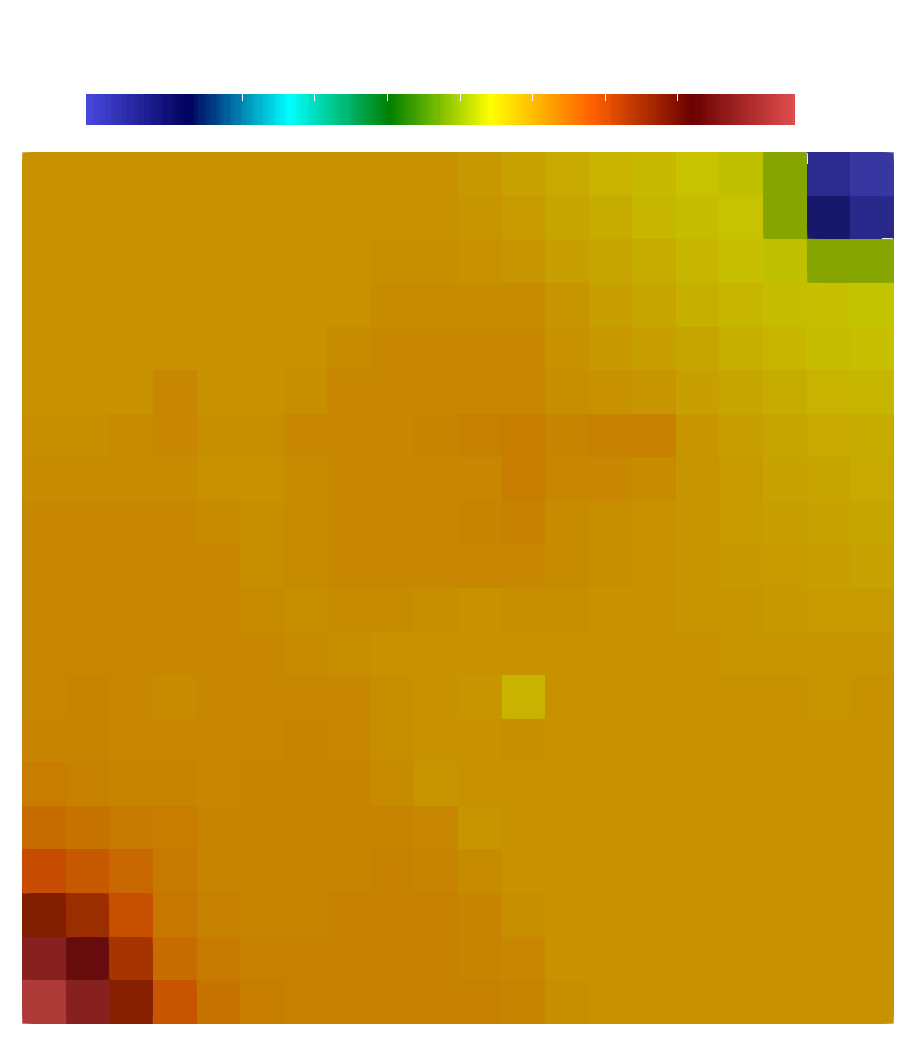}
		\caption{Coarse-scale solution of pressure $u^1$ -- Cluster 1}
	\end{subfigure}
	\begin{subfigure}[b]{0.35\textwidth}
		\centering
		\includegraphics[width = \textwidth]{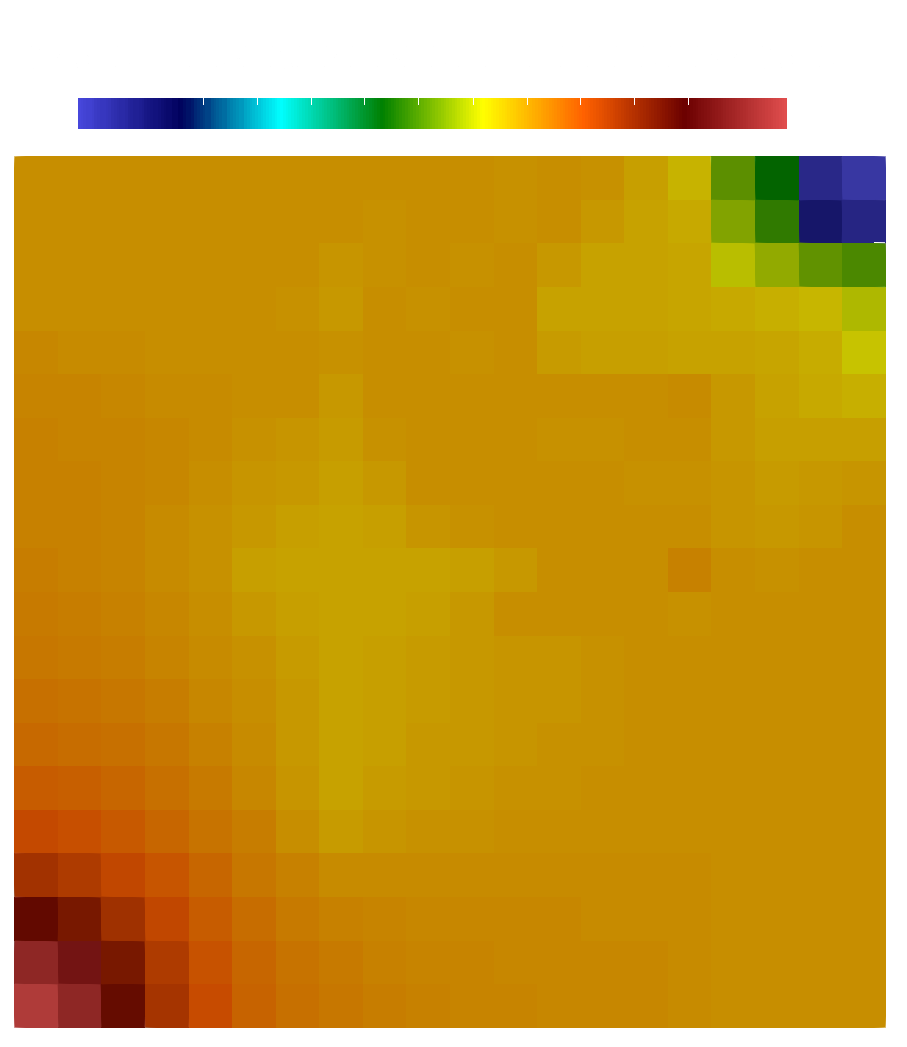}
		\caption{Coarse-scale solution of pressure $u^1$ -- Cluster 2}
	\end{subfigure}
	\caption{Example solution pairs for two different clusters.}
	\label{fig:solutions}
\end{figure}

Figure \ref{fracture_network} shows the fracture network we use to generate data for two clusters, while Figure \ref{fig:solutions} shows an example of solution $U^{0}$ and $U^1$ for each cluster. As observed from the profiles of $U^0$ and $U^1$, we can see that these the solutions are very different due to the translation of the fractures. Moreover, since the data resulted from both clusters have non-uniform map between $U^0$ and $U^1$, the mixed data set can be considered as obtained from a nonlinear map.
%\subsubsection{Training results}

%\begin{itemize}
%	\item training steps: 20000
%	\item $\gamma $ :0.001
%	\item learning rate :10.0
%\end{itemize}
% Table generated by Excel2LaTeX from sheet 'NLMC_gamma_001_error_table'

% Table generated by Excel2LaTeX from sheet 'NLMC_gamma_001_error_table'
\begin{table}[htbp]
	
	\centering
	\begin{subtable}[t]{0.45\textwidth}
		
	\begin{tabular}{|c|p{1.5cm}|p{1.5cm}|}
		\hline
		Sample Index &  Cluster 1 Error & Cluster 2 Error \\
		\hline
	\#1     & 0.87  & 0.26 \\
	\#2     & 3.19  & 0.45 \\
	\#3     & 13.62 & 0.79 \\
	\#4     & 12.36 & 0.49 \\
	\#5     & 7.64  & 0.43 \\
	\#6     & 10.69 & 1.00 \\
	\#7     & 19.08 & 0.53 \\
	\#8     & 2.57  & 0.92 \\
	\#9     & 0.77  & 0.46 \\
	\#10    & 3.70  & 0.38 \\
		\hline
		Mean  & 7.45  & 0.57 \\
		\hline
	\end{tabular}%
	\caption{Relative $\ell_2 $ prediction error(\%) of $\mathcal{N}_1$ and $\mathcal{N}_2$.}
	\label{tab:seperate training}%
\end{subtable}
% Table generated by Excel2LaTeX from sheet 'true_mixed_001_error_table'
	\begin{subtable}[t]{0.45\textwidth}
	\centering
	\begin{tabular}{|c|p{1.5cm}|p{1.5cm}|}
		\hline
		Sample Index&  Cluster 1 Error  & Cluster 2 Error \\
		\hline
	 \# 1     & 36.25 & 84.83 \\
	\#2     & 35.08 & 84.20 \\
	\#3     & 28.20 & 111.05 \\
	\#4     & 44.60 & 69.35 \\
	\#5     & 32.32 & 89.30 \\
	\#6     & 30.03 & 106.41 \\
	\#7     & 48.61 & 57.76 \\
	\#8     & 35.63 & 88.46 \\
	\#9     & 36.89 & 88.67 \\
	\#10    & 38.78 & 83.85 \\
			\hline
Mean	& 36.64 & 86.39 \\
		\hline
	\end{tabular}%
	\caption{Relative $\ell_2 $ prediction error(\%) of $\mathcal{N}_{\text{mixed}}$.}
	\label{tab:mixed_training}%
	\end{subtable}
\caption{Prediction error of $\mathcal{N}_1$, $\mathcal{N}_2$ and $\mathcal{N}_\text{mixed}$.}
\label{tab:cluster}
\end{table}%

Table \ref{tab:cluster} demonstrates the comparison of the prediction accuracy when the network is fed with a single cluster data and when the network is fed with a mixed data. This simple treatment can significantly improve the accuracy. For Cluster 1, the average prediction error of $\mathcal{NN}_1$ is around $7.45\%$ while that of the $\mathcal{NN}_{\text{mixed}}$ is around $36.64\%$. Similar contrast can also be observed for Cluster 2.

%\newpage
\section{Conclusion}\label{section_conclude}
In this paper, we discuss a novel deep 
neural network approach to 
model reduction approach for multiscale problems.  
To numerically solve multiscale problems,  
a fine grid needs to be used and results in a huge number of 
degrees of freedom.
To this end, non-local multicontinuum (NLMC) upscaling \cite{NLMC} is 
used as a dimensionality reduction technique. 
In flow dynamics problems, multiscale solutions at consecutive time instants are 
regarded as an input-output mechanism and learnt from deep neural networks techniques. 
By exploiting a relation between a soft-thresholding neural network and $\ell_1$ minimization, 
multiscale features of the coarse-grid solutions are extracted using neural networks. 
This results in a new neural-network-based construction of reduced-order model, 
which involves extracting appropriate important modes at each time step. 
We also suggest an efficient strategy for a class of nonlinear flow problems.
Finally, we present numerical examples to demonstrate the performance of 
our method applied to some selected problems.

%\newpage
\bibliographystyle{plain}
\bibliography{references,referenced_deep_MS}

\end{document}